\providecommand{\U}[1]{\protect\rule{.1in}{.1in}}
\newtheorem{theorem}{Theorem}
\newtheorem{corollary}[theorem]{Corollary}
\newtheorem{lemma}[theorem]{Lemma}
\newtheorem{proposition}[theorem]{Proposition}
\newtheorem{remark}{Remark}
\newenvironment{proof}[1][Proof]{\noindent\textbf{#1.} }{\ \rule{0.5em}{0.5em}}
\begin{document}

\title{Generalized harmonic numbers via poly-Bernoulli polynomials}
\author{Levent Karg\i n\thanks{lkargin@akdeniz.edu.tr}, Mehmet Cenkci \thanks{cenkci@akdeniz.edu.tr}
Ayhan Dil\thanks{adil@akdeniz.edu.tr} and M\"{u}m\"{u}n Can\thanks{mcan@akdeniz.edu.tr}
\\ {\small Department of Mathematics, Akdeniz University, Antalya, Turkey}}
\date{}
\maketitle

\begin{abstract}
We present a relationship between the generalized hyperharmonic numbers and the poly-Bernoulli
polynomials, motivated from the connections between harmonic and Bernoulli numbers. This relationship yields numerous identities for the hyper-sums
and several congruences.

\textbf{MSC 2010:} 11B75, 11B68, 11B73, 11A07

\textbf{Keywords: }Harmonic numbers, hyperharmonic numbers, generalized harmonic numbers, poly-Bernoulli polynomials, Stirling numbers, hyper-sums, congruences.

\end{abstract}

\section{Introduction}
The $n$th harmonic number is defined by $H_{n}=1+\frac{1}{2}+\frac{1}%
{3}+\cdots+\frac{1}{n},$ where $H_{0}$ is conventionally understood to be
zero. The harmonic numbers naturally find places in mathematics and
applications such as combinatorics, mathematical analysis, number theory,
computer sciences. Therefore, introducing new representations and closed forms
for the harmonic numbers and their generalizations, relating harmonic numbers with other subjects are active research areas (see, for example \cite{B2,CaDa,C-M,DiMeCe,KC, Keller,W,Y1,Z}).

Among many other generalizations of the harmonic numbers, we are inte\-rested in
a unified generalization, the generalized hyperharmonic numbers, defined as
\[
H_{n}^{\left(  p,r\right)  }=\sum_{k=1}^{n}H_{k}^{\left(  p,r-1\right)  },
\]
with $H_{k}^{\left(  p,0\right)  }=1/k^{p}$ (see \cite{DiMeCe}). These numbers
extend two famous generalizations of the harmonic numbers, namely the
generalized harmonic numbers $H_{n}^{\left(  p,1\right)  }=H_{n}^{\left(
p\right)  }=\sum_{k=1}^{n}$ $1/k^{p}$ and the hyperharmonic numbers
$H_{n}^{\left(  1,r\right)  }=h_{n}^{(r)}=\sum_{k=1}^{n}h_{k}^{\left(
r-1\right)  }$.

There is an extensive literature on the harmonic and hyperharmonic numbers.
Among which we emphasize the formulas
\begin{equation}
\sum_{k=0}^{n}\left(  -1\right)  ^{k}
\genfrac{[}{]}{0pt}{}{n+1}{k+1}
B_{k}=n!H_{n+1}\label{1.1}%
\end{equation}
(see \cite{C-M} and see also \cite{CaDa,KC,W}) and
\begin{equation}
\sum_{k=0}^{n}
\genfrac{[}{]}{0pt}{}{n+r}{k+r}
_{r}B_{k}=n!h_{n+1}^{\left(  r-1\right)  }\label{HSB}%
\end{equation}
(see \cite{CaDa,KC}). Here $B_{n}$ is $n$th Bernoulli number defined by means
of the generating function%
\[
\frac{t}{e^{t}-1}=\sum_{n=0}^{\infty}B_{n}\frac{t^{n}}{n!},
\]
$
\genfrac{[}{]}{0pt}{}{n+r}{k+r}
_{r}$ is the $r$-Stirling number of the first kind defined by
\[
\left(  x+r\right)  ^{\overline{n}}=\left(  x+r\right)  \left(  x+r+1\right)  \cdots\left(  x+r+n-1\right)
=\sum\limits_{k=0}^{n}
\genfrac{[}{]}{0pt}{}{n+r}{k+r}
_{r}x^{k}%
\]
(see \cite{Broder1984}), and $
\genfrac{[}{]}{0pt}{}{n+1}{k+1}
_{1}=
\genfrac{[}{]}{0pt}{}{n+1}{k+1}
$ is the ordinary Stirling number of the first kind.

Equations (\ref{1.1}) and (\ref{HSB}) represent harmonic and hyperharmonic
numbers in terms of the Stirling and $r$-Stirling numbers of the first kind,
and Bernoulli numbers. These then give rise to the natural question of
representing generalized harmonic and generalized hyperharmonic numbers as
similar formulas. An affirmative answer to this question is given by the
following theorem, which represents the generalized hyperharmonic numbers in
terms of the $r$-Stirling numbers and poly-Bernoulli polynomials
$B_{n}^{\left(  p\right)  }\left(  x\right)  .$ The polynomials $B_{n}%
^{\left(  p\right)  }\left(  x\right)  $ are defined by
\begin{equation}
\sum_{n=0}^{\infty}B_{n}^{\left(  p\right)  }\left(  x\right)  \frac{t^{n}%
}{n!}=\frac{\mathrm{Li}_{p}\left(  1-e^{-t}\right)  }{1-e^{-t}}e^{xt}%
\label{gf-pB}%
\end{equation}
(see \cite{Bayad}), where $\mathrm{Li}_{p}\left(  t\right)  =\sum
_{n=1}^{\infty}t^{n}/n^{p}$ stands for the polylogarithm function.
\begin{theorem}
\label{teo3}For all non-negative integers $n$ and $r$, we have
\begin{equation}
\sum_{k=0}^{n}
\genfrac{[}{]}{0pt}{}{n+r}{k+r}
_{r}B_{k}^{\left(  p\right)  }\left(  q\right)  =n!H_{n+1}^{\left(
p,q+r\right)  }. \label{gh-pB}%
\end{equation}
\end{theorem}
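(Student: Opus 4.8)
The plan is to prove \eqref{gh-pB} by showing that both sides have the same generating function. Because the $r$-Stirling numbers of the first kind carry factorials naturally while the hyperharmonic numbers are built from partial sums, I would match an \emph{exponential} generating function in $n$ on the left against an \emph{ordinary} generating function on the right; the asserted factor $n!$ is exactly what reconciles the two conventions. Writing $S_{n}=\sum_{k=0}^{n}\genfrac{[}{]}{0pt}{}{n+r}{k+r}_{r}B_{k}^{(p)}(q)$ and using that $\genfrac{[}{]}{0pt}{}{n+r}{k+r}_{r}=0$ for $n<k$, I would form
\[
\sum_{n\ge 0} S_{n}\,\frac{t^{n}}{n!}=\sum_{k\ge 0} B_{k}^{(p)}(q)\sum_{n\ge k}\genfrac{[}{]}{0pt}{}{n+r}{k+r}_{r}\frac{t^{n}}{n!},
\]
the interchange of summations being legitimate at the level of formal power series.

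The first ingredient is the exponential generating function of the $r$-Stirling numbers of the first kind. Starting from the defining expansion $(x+r)^{\overline{n}}=\sum_{k}\genfrac{[}{]}{0pt}{}{n+r}{k+r}_{r}x^{k}$ together with the rising-factorial identity $\sum_{n\ge 0}(x+r)^{\overline{n}}\,t^{n}/n!=(1-t)^{-(x+r)}$, I would expand $(1-t)^{-x}=\exp\!\big(x(-\ln(1-t))\big)$ and compare coefficients of $x^{k}$ to obtain
\[
\sum_{n\ge k}\genfrac{[}{]}{0pt}{}{n+r}{k+r}_{r}\frac{t^{n}}{n!}=\frac{\big(-\ln(1-t)\big)^{k}}{k!}\,(1-t)^{-r}.
\]
Substituting this into the double sum leaves $\sum_{n\ge 0}S_{n}\,t^{n}/n!=(1-t)^{-r}\sum_{k\ge 0}B_{k}^{(p)}(q)\big(-\ln(1-t)\big)^{k}/k!$.

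The key step is to recognize the remaining sum as the poly-Bernoulli generating function \eqref{gf-pB} evaluated at $u=-\ln(1-t)$: then $e^{-u}=1-t$, so $1-e^{-u}=t$ and $e^{qu}=(1-t)^{-q}$, and the sum collapses to $\mathrm{Li}_{p}(t)(1-t)^{-q}/t$. Hence
\[
\sum_{n\ge 0}S_{n}\,\frac{t^{n}}{n!}=\frac{\mathrm{Li}_{p}(t)}{t}\,(1-t)^{-(q+r)}.
\]
For the right-hand side I would note that $H_{n}^{(p,0)}=1/n^{p}$ has ordinary generating function $\mathrm{Li}_{p}(t)$, and that the defining recursion $H_{n}^{(p,m)}=\sum_{k=1}^{n}H_{k}^{(p,m-1)}$ multiplies the ordinary generating function by $1/(1-t)$ at each increment of $m$; thus $\sum_{n\ge 1}H_{n}^{(p,q+r)}t^{n}=\mathrm{Li}_{p}(t)(1-t)^{-(q+r)}$, and one index shift gives $\sum_{n\ge 0}H_{n+1}^{(p,q+r)}t^{n}=\mathrm{Li}_{p}(t)(1-t)^{-(q+r)}/t$. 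Comparing coefficients of $t^{n}$ in the two matching series yields $S_{n}/n!=H_{n+1}^{(p,q+r)}$, which is precisely \eqref{gh-pB}.

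The main obstacle is organizational rather than conceptual: one must juggle two different kinds of generating functions and make the bridging substitution $u=-\ln(1-t)$ precise, verifying the formal-power-series manipulations and ensuring that $q+r$ is a non-negative integer (or extending the definition of $H_{n+1}^{(p,q+r)}$ through its generating function) so that the right-hand side is meaningful. As a sanity check, for $p=1$ one has $\mathrm{Li}_{1}(1-e^{-t})=t$, so $B_{k}^{(1)}(q)=B_{k}(q+1)$ are shifted Bernoulli polynomials, which ties \eqref{gh-pB} back to the classical formulas \eqref{1.1} and \eqref{HSB}.
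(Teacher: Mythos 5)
Your proof is correct, and it runs on the same engine as the paper's: the change of variable $u=-\ln(1-t)$ (equivalently $t\mapsto 1-e^{-t}$) that turns the poly-Bernoulli generating function \eqref{gf-pB} into $\mathrm{Li}_{p}(t)(1-t)^{-q}/t$ and thereby connects it to the hyperharmonic generating function \eqref{Hgf}. The difference is one of direction and bookkeeping. The paper substitutes $t\to 1-e^{-t}$ into \eqref{Hgf}, expands $(e^{-t}-1)^{n}e^{-rt}$ by the second-kind $r$-Stirling generating function \eqref{egfrS} to obtain the inverse identity \eqref{pB-gh}, and then recovers \eqref{gh-pB} by the $r$-Stirling transform. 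You instead compute the exponential generating function of the left-hand side directly, using the first-kind $r$-Stirling identity $\sum_{n\ge k}\genfrac{[}{]}{0pt}{}{n+r}{k+r}_{r}\,t^{n}/n!=\bigl(-\ln(1-t)\bigr)^{k}(1-t)^{-r}/k!$, which you correctly derive from the defining polynomial relation and which the paper never states. This buys you a one-directional argument with no inversion step, at the cost of establishing that first-kind generating function yourself; the paper's route stays entirely within identities it has already recorded and produces \eqref{pB-gh} as a by-product that it reuses later (for instance in deriving \eqref{12}). Your concluding remarks about formal-power-series legitimacy and the meaning of $H_{n+1}^{(p,q+r)}$ for general parameters are appropriate, and the $p=1$ sanity check is consistent with the paper's relation $B_{n}^{(1)}(x-1)=B_{n}(x)$.
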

As a result, we deduce the following identity which answers the question of
which type of the Bernoulli numbers are related to the generalized harmonic
numbers $H_{n}^{\left(  p\right)  }$.

\begin{corollary}
For any non-negative integer $n$%
\begin{equation}
\sum_{k=0}^{n}
\genfrac{[}{]}{0pt}{}{n+1}{k+1}
B_{k}^{\left(  p\right)  }=n!H_{n+1}^{\left(  p\right)  }, \label{hpb}%
\end{equation}
where $B_{k}^{\left(  p\right)  }=B_{k}^{\left(  p\right)  }\left(  0\right)
$ is the $k$th poly-Bernoulli number (see \cite{Kaneko}).
\end{corollary}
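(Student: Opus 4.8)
The plan is to prove (\ref{gh-pB}) by matching generating functions on both sides. First I would establish an ordinary generating function for the generalized hyperharmonic numbers. Since $H_{n}^{(p,r)}$ is produced from $H_{n}^{(p,0)}=1/n^{p}$ by applying the partial-summation operator $r$ times, and partial summation multiplies an ordinary generating function by $1/(1-t)$, I expect
\[
\sum_{n=1}^{\infty}H_{n}^{(p,r)}t^{n}=\frac{\mathrm{Li}_{p}(t)}{(1-t)^{r}},
\]
which I would confirm by induction on $r$, the base case $r=0$ being $\sum_{n\geq1}t^{n}/n^{p}=\mathrm{Li}_{p}(t)$.

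Next I would record the exponential generating function of the $r$-Stirling numbers of the first kind. From the defining expansion of $(x+r)^{\overline{n}}$ and the elementary identity $\sum_{n\geq0}(x+r)^{\overline{n}}t^{n}/n!=(1-t)^{-(x+r)}$, writing $(1-t)^{-x}=e^{-x\ln(1-t)}$ and extracting the coefficient of $x^{k}$ gives
\[
\sum_{n\geq0}\genfrac{[}{]}{0pt}{}{n+r}{k+r}_{r}\frac{t^{n}}{n!}=\frac{(1-t)^{-r}}{k!}\bigl(-\ln(1-t)\bigr)^{k}.
\]

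Now I would form the exponential generating function $F(t)=\sum_{n\geq0}\bigl(\sum_{k=0}^{n}\genfrac{[}{]}{0pt}{}{n+r}{k+r}_{r}B_{k}^{(p)}(q)\bigr)t^{n}/n!$ of the left-hand side, interchange the two summations, and substitute the formula above. This collapses $F(t)$ to $(1-t)^{-r}\sum_{k\geq0}B_{k}^{(p)}(q)(-\ln(1-t))^{k}/k!$. The decisive step is to recognize the remaining series as the poly-Bernoulli generating function (\ref{gf-pB}) evaluated at $u=-\ln(1-t)$: then $e^{-u}=1-t$, so $1-e^{-u}=t$ and $e^{qu}=(1-t)^{-q}$, whence the series equals $\mathrm{Li}_{p}(t)(1-t)^{-q}/t$. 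Therefore
\[
F(t)=\frac{\mathrm{Li}_{p}(t)}{t\,(1-t)^{q+r}}=\frac{1}{t}\sum_{n=1}^{\infty}H_{n}^{(p,q+r)}t^{n}=\sum_{m=0}^{\infty}H_{m+1}^{(p,q+r)}t^{m},
\]
and comparing the coefficient of $t^{n}$ on both sides yields (\ref{gh-pB}). The Corollary is then the special case $r=1$, $q=0$, since $\genfrac{[}{]}{0pt}{}{n+1}{k+1}_{1}=\genfrac{[}{]}{0pt}{}{n+1}{k+1}$, $B_{k}^{(p)}(0)=B_{k}^{(p)}$, and $H_{n+1}^{(p,1)}=H_{n+1}^{(p)}$.

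I expect the main obstacle to be justifying the substitution $u=-\ln(1-t)$ as a composition of formal power series together with the interchange of summation defining $F(t)$; both are legitimate because $-\ln(1-t)=t+t^{2}/2+\cdots$ has zero constant term, so the composition is well defined and the double series makes sense as a formal power series in $t$. The two generating-function lemmas and the bookkeeping of the index shift $n\mapsto n+1$ hidden in the factor $1/t$ are routine.
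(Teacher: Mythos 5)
Your argument is correct, and it reaches the Corollary the same way the paper does (as the case $r=1$, $q=0$ of Theorem \ref{teo3}), but your proof of Theorem \ref{teo3} itself runs in the opposite direction from the paper's. The paper substitutes $t\rightarrow 1-e^{-t}$ into the generating function (\ref{Hgf}), expands via the exponential generating function (\ref{egfrS}) of the \emph{second}-kind $r$-Stirling numbers to obtain the inverse identity $B_{n}^{(p)}(q+1-r)=\sum_{k}(-1)^{n-k}\genfrac{\{}{\}}{0pt}{}{n+r}{k+r}_{r}k!H_{k+1}^{(p,q+1)}$, and then inverts by the $r$-Stirling transform. You instead compute the exponential generating function of the left-hand side of (\ref{gh-pB}) directly, using the \emph{first}-kind $r$-Stirling generating function $\sum_{n\geq k}\genfrac{[}{]}{0pt}{}{n+r}{k+r}_{r}t^{n}/n!=(1-t)^{-r}(-\ln(1-t))^{k}/k!$ (which the paper never states but which is standard and follows from $\sum_{n}(x+r)^{\overline{n}}t^{n}/n!=(1-t)^{-(x+r)}$), and then compose the poly-Bernoulli generating function with $u=-\ln(1-t)$ — the inverse of the paper's substitution. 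The payoff of your route is that the $r$-Stirling transform is not needed at all, and the identity (\ref{gh-pB}) drops out in one coefficient comparison; the cost is that you must prove the first-kind generating function lemma, which the paper avoids by quoting only (\ref{egfrS}) and the transform from \cite{Broder1984}. Both arguments are formal power series manipulations and both are complete; your justification that the composition is legitimate because $-\ln(1-t)$ has zero constant term is exactly the right remark.
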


We also observe the following correspondence between the generalized
hyperharmonic numbers and the hyper-sums $S_{p}^{\left(  q\right)  }\left(
n\right)  $:%
\begin{equation}
H_{n}^{\left(  -p,q+1\right)  }=S_{p}^{\left(  q\right)  }\left(  n\right)
.\label{HSq}%
\end{equation}
The hyper-sum is introduced by Faulhaber as%
\[
S_{p}^{\left(  r\right)  }\left(  n\right)  =\sum_{k=1}^{n}S_{p}%
^{(r-1)}\left(  k\right)  ,
\]
with $S_{p}^{\left(  0\right)  }\left(  n\right)  =S_{p}\left(  n\right)
=1^{p}+2^{p}+\cdots+n^{p}$ (see \cite{Knuth}). The sums of powers of integers $S_{p}\left(
n\right)  $ have been interested since the classical times, for details see
\cite{GKP,G,Knuth,Merca}. Some recent studies on the hyper-sums include
explicit formulas, connection with the Ber\-noulli numbers, congruences,
generating functions, and recurrence formulas (\cite{Cereceda2,
Cereceda,Inibia,Laissaoui-Bounebirat-Rahmani2017}). The surprising
correspondence (\ref{HSq}) gives rise to numerous identities such as%
\[
S_{p}^{\left(  q\right)  }\left(  n\right)  =\sum_{j=1}^{p}\left(  -1\right)
^{p+j}n
\genfrac{\{}{\}}{0pt}{}{p}{j}
\binom{q+n+j}{q+1+j}j!,
\]
and congruences like
\[
S_{p}^{\left(  q\right)  }\left(  n\right)  \equiv\binom{n+q+1}{q+2}\pmod p,
\]
for a prime number $p$.

Apart from the identities (\ref{1.1}) and (\ref{HSB}), the harmonic numbers
are related to the Bernoulli numbers and polynomials via
\begin{equation}
\sum_{k=1}^{n}%
\left(  -1\right)  ^{k-1}
\genfrac{[}{]}{0pt}{}{n}{k}%
kB_{k-1}=\frac{2n!}{n+1}H_{n} \label{1.2}%
\end{equation}
(see \cite{Keller}) and%
\begin{align}
\sum_{k=1}^{n}
\genfrac{[}{]}{0pt}{}{n+m}{k+m}
_{m}kB_{k-1}\left(  r\right)  \mathbf{=}n!\binom{n+r+m-1}{r+m-2}  &  \left\{
\left(  H_{n+r+m-1}-H_{r+m-2}\right)  ^{2}\right. \nonumber\\
&  \left.  -H_{n+r+m-1}^{\left(  2\right)  }+H_{r+m-2}^{\left(  2\right)
}\right\}  \label{6}%
\end{align}
(see \cite{KC}). Since $B_{k}^{\prime}\left(  x\right)  =kB_{k-1}\left(
x\right)  $, the terms $kB_{k-1}$ and $kB_{k-1}\left(  r\right)  $ in the
summands suggest whether
\[
\sum_{k=1}^{n}
\genfrac{[}{]}{0pt}{}{n+m}{k+m}
_{m}\left.  \frac{d^{l}}{dx^{l}}B_{k}\left(  x\right)  \right\vert _{x=r}%
\]
can be written in terms of the harmonic numbers of any kind. We deal with this
problem in Theorem \ref{teoder}. It is worthwhile to mention that in the proof
of Theorem \ref{teoder}, the question raised in \cite{Dil,Mez} on the general
form of higher order derivatives of $h_{n}^{\left(  x\right)  }$ with respect
to $x$ is answered in a different sense.

The organization of the paper is as follows. Section 2 is a preliminary
section in which we give notation and basic definitions needed in the paper.
In Section 3 we prove Theorems \ref{teo3} and \ref{teoder}, and obtain some
recurrence formulas for the hyperharmonic and generalized harmonic numbers. In
Section 4 utilizing (\ref{HSq}) we derive numerous formulas for the hyper-sums
$S_{p}^{\left(  q\right)  }\left(  n\right)  $. Moreover, new results for the
sums of powers of integers $S_{p}\left(  n\right)  $ are presented. We
conclude the paper with Section 5 where we present some congruences for the
generalized hyperharmonic numbers, hyperharmonic numbers and hyper-sums.

\section{Preliminaries}
The $r$-Stirling numbers, which are natural generalizations of the ordinary
Stirling numbers, may be defined either in a combinatorial or in an analytic
way. Analytic way includes the generating functions. The $r$-Stirling numbers
of the second kind, denoted by $
\genfrac{\{}{\}}{0pt}{}{n+r}{k+r}
_{r}$, are defined by means of the exponential generating function%
\begin{equation}
\sum_{n=k}^{\infty}
\genfrac{\{}{\}}{0pt}{}{n+r}{k+r}
_{r}\frac{z^{n}}{n!}=\frac{\left(  e^{z}-1\right)  ^{k}}{k!}e^{rz}
\label{egfrS}%
\end{equation}
(see \cite{Broder1984}). The $r$-Stirling numbers of the first and the second kind are related via the $r$-Stirling
transform:%
\begin{equation*}
b_{n}=\sum_{k=0}^{n}
\genfrac{[}{]}{0pt}{}{n+r}{k+r}
_{r}a_{k}\text{ if and only if }a_{n}=\sum_{k=0}^{n}\left(  -1\right)  ^{n-k}
\genfrac{\{}{\}}{0pt}{}{n+r}{k+r}
_{r}b_{k} %
\end{equation*}
(see \cite{Broder1984}). In particular $
\genfrac{\{}{\}}{0pt}{}{n}{k}
_{0}=
\genfrac{\{}{\}}{0pt}{}{n}{k}
,\text{ }
\genfrac{\{}{\}}{0pt}{}{n+1}{k+1}
_{1}=
\genfrac{\{}{\}}{0pt}{}{n+1}{k+1}
,
$
where $
\genfrac{\{}{\}}{0pt}{}{n}{k}
$ is the ordinary Stirling number of the second kind, and%
$
\genfrac{[}{]}{0pt}{}{n}{k}
_{0}=
\genfrac{[}{]}{0pt}{}{n}{k}
,\text{ }
\genfrac{[}{]}{0pt}{}{n+1}{k+1}
_{1}=
\genfrac{[}{]}{0pt}{}{n+1}{k+1},
$
where $
\genfrac{[}{]}{0pt}{}{n}{k}
$ is the ordinary Stirling number of the first kind.

We list some of the basic facts about the Stirling numbers in the following.
\begin{lemma}
\label{conlemma1}We have

\noindent\hspace{-0.09in}%
\begin{tabular}
[t]{lllll}%
(1) $%
\genfrac{[}{]}{0pt}{}{0}{0}%
=1$, & $\hspace{-0.1in}%
\genfrac{[}{]}{0pt}{}{n}{0}%
=%
\genfrac{[}{]}{0pt}{}{0}{n}%
=0$, $n>0$, & $\hspace{-0.1in}%
\genfrac{[}{]}{0pt}{}{n}{1}%
=\left(  n-1\right)  !$, & $\hspace{-0.1in}%
\genfrac{[}{]}{0pt}{}{n}{n-1}%
=\frac{n\left(  n-1\right)  }{2}$, & $\hspace{-0.1in}%
\genfrac{[}{]}{0pt}{}{n}{n}%
=1.\vspace{0.05in}$\\
(2) $%
\genfrac{\{}{\}}{0pt}{}{0}{0}%
=1$, & $\hspace{-0.1in}%
\genfrac{\{}{\}}{0pt}{}{n}{0}%
=%
\genfrac{\{}{\}}{0pt}{}{0}{n}%
=0$, $n>0$, & $\hspace{-0.1in}%
\genfrac{\{}{\}}{0pt}{}{n}{1}%
=%
\genfrac{\{}{\}}{0pt}{}{n}{n}%
=1$, & $\hspace{-0.1in}%
\genfrac{\{}{\}}{0pt}{}{n}{n-1}%
=\frac{n\left(  n-1\right)  }{2}$ & (\cite{C1974}).$\vspace{0.05in}$%
\end{tabular}

\noindent(3) $%
\genfrac{[}{]}{0pt}{}{n+r}{0+r}%
_{r}=r^{\overline{n}}$, $%
\genfrac{[}{]}{0pt}{}{n+r}{1+r}%
_{r}=n!h_{n}^{\left(  r\right)  }$, $%
\genfrac{[}{]}{0pt}{}{n+r}{n-1+r}%
_{r}=\frac{n\left(  n-1\right)  }{2}+nr$,\ $%
\genfrac{[}{]}{0pt}{}{n+r}{n+r}%
_{r}=1$, and $%
\genfrac{[}{]}{0pt}{}{n+n}{m+n}%
_{n}=\delta_{m,n}$, where $\delta_{m,n}$ stands for the Kronecker's delta
(\cite{Broder1984}).$\vspace{0.05in}$

\noindent(4) For any $r$, $%
\genfrac{\{}{\}}{0pt}{}{n+r}{k+r}%
_{r}\equiv%
\genfrac{[}{]}{0pt}{}{n+r}{k+r}%
_{r}\equiv0\pmod n$ for a prime number $n$, provided that $k=2,3,\ldots,n-1$
(\cite{Hsu-Shiue1998}).
\end{lemma}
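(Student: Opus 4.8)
The plan is to derive each assertion directly from the two defining relations available in the excerpt: the rising-factorial expansion $(x+r)^{\overline{n}}=\sum_{k=0}^{n}\genfrac{[}{]}{0pt}{}{n+r}{k+r}_{r}x^{k}$ for the numbers of the first kind, and the exponential generating function (\ref{egfrS}) for those of the second kind. Parts (1) and (2) are simply the $r=0$ specializations of the general $r$-Stirling computations, so it is enough to treat the $r$-versions and set $r=0$ where needed.

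For the first-kind values I would extract coefficients of $(x+r)^{\overline{n}}=\prod_{j=0}^{n-1}(x+r+j)$ by Vieta's formulas: evaluating at $x=0$ gives the constant term $r^{\overline{n}}$; the top coefficient is $1$; the coefficient of $x^{n-1}$ is the sum of the shifts $\sum_{j=0}^{n-1}(r+j)=nr+\frac{n(n-1)}{2}$; and the coefficient of $x^{1}$ I would obtain from the logarithmic derivative $\frac{d}{dx}\log(x+r)^{\overline{n}}=\sum_{j=0}^{n-1}\frac{1}{x+r+j}$ at $x=0$, giving $r^{\overline{n}}\bigl(H_{n+r-1}-H_{r-1}\bigr)$. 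Writing $r^{\overline{n}}=n!\binom{n+r-1}{n}$ and using the closed form $h_{n}^{(r)}=\binom{n+r-1}{n}\bigl(H_{n+r-1}-H_{r-1}\bigr)$ then identifies this with $n!h_{n}^{(r)}$. Setting $r=0$ (with the empty product equal to $1$, so $\genfrac{[}{]}{0pt}{}{0}{0}=1$) recovers all entries of part (1); the remaining Kronecker-delta value is the boundary/initial condition of the $r$-Stirling numbers, expressing that requiring all distinguished elements to lie in distinct cycles leaves only the identity configuration.

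For part (2) I would read the coefficient of $z^{n}/n!$ in $(e^{z}-1)^{k}/k!$: the cases $k=0,1$ follow from $(e^{z}-1)^{0}=1$ and $e^{z}-1=\sum_{n\ge1}z^{n}/n!$, while $\genfrac{\{}{\}}{0pt}{}{n}{n}=1$ and $\genfrac{\{}{\}}{0pt}{}{n}{n-1}=\binom{n}{2}$ come fastest from the set-partition interpretation (a single doubleton among singletons, chosen in $\binom{n}{2}$ ways).

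For the congruences in part (4) I would work modulo the prime $n$. For the first kind the key step is $x^{\overline{n}}=\prod_{j=0}^{n-1}(x+j)\equiv x^{n}-x\pmod{n}$, valid because the factors run through all residues of $\mathbb{F}_{n}$; hence $\genfrac{[}{]}{0pt}{}{n}{k}\equiv0$ for $2\le k\le n-1$. For the second kind I would use $k!\genfrac{\{}{\}}{0pt}{}{n}{k}=\sum_{j}(-1)^{k-j}\binom{k}{j}j^{n}$, reduce it by Fermat's little theorem $j^{n}\equiv j$ to $k!\genfrac{\{}{\}}{0pt}{}{1}{k}=0$ for $k\ge2$, and cancel $k!$, which is a unit mod $n$ for $k\le n-1$. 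The $r$-versions reduce to these via the convolutions $\genfrac{\{}{\}}{0pt}{}{n+r}{k+r}_{r}=\sum_{j}\binom{n}{j}r^{n-j}\genfrac{\{}{\}}{0pt}{}{j}{k}$ and $\genfrac{[}{]}{0pt}{}{n+r}{k+r}_{r}=\sum_{j}\binom{n}{j}r^{\overline{n-j}}\genfrac{[}{]}{0pt}{}{j}{k}$ (read from (\ref{egfrS}) and the rising-factorial Vandermonde): since $\binom{n}{j}\equiv0\pmod{n}$ for $1\le j\le n-1$, only the $j=n$ term survives, so $\genfrac{\{}{\}}{0pt}{}{n+r}{k+r}_{r}\equiv\genfrac{\{}{\}}{0pt}{}{n}{k}$ and $\genfrac{[}{]}{0pt}{}{n+r}{k+r}_{r}\equiv\genfrac{[}{]}{0pt}{}{n}{k}\pmod{n}$. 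The one point requiring care is not any single computation but keeping the $r$-shift consistent throughout—above all the passage from the $x^{1}$-coefficient to the hyperharmonic closed form in (3); everything else is routine coefficient extraction, and the statement may alternatively be assembled by citing Comtet, Broder, and Hsu--Shiue as indicated.
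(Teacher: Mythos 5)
The paper supplies no proof of this lemma at all: every item is quoted from the cited sources (Comtet for (1)--(2), Broder for (3), Hsu--Shiue for (4)), so your self-contained derivation from the two defining relations is a genuinely different route, and it is essentially correct. The Vieta/evaluation extractions for the first kind (constant term $r^{\overline{n}}$, top coefficient $1$, $x^{n-1}$-coefficient $nr+\frac{n(n-1)}{2}$), the logarithmic-derivative identification of the $x^{1}$-coefficient with $n!h_{n}^{(r)}$ via (\ref{4}), and the EGF coefficient readings for the second kind are all sound. Your proof of (4) is the most valuable part: $x^{\overline{n}}=\prod_{j=0}^{n-1}(x+j)\equiv x^{n}-x\pmod{n}$ because the shifts exhaust $\mathbb{F}_{n}$; Fermat applied to $k!\genfrac{\{}{\}}{0pt}{}{n}{k}=\sum_{j}(-1)^{k-j}\binom{k}{j}j^{n}$ with $k!$ a unit for $k\le n-1$; and the convolutions $\genfrac{\{}{\}}{0pt}{}{n+r}{k+r}_{r}=\sum_{j}\binom{n}{j}r^{n-j}\genfrac{\{}{\}}{0pt}{}{j}{k}$ and $\genfrac{[}{]}{0pt}{}{n+r}{k+r}_{r}=\sum_{j}\binom{n}{j}r^{\overline{n-j}}\genfrac{[}{]}{0pt}{}{j}{k}$ (the latter from Vandermonde for rising factorials), in which $\binom{n}{j}\equiv0\pmod{n}$ for $1\le j\le n-1$ and the $j=0$ term dies since $\genfrac{[}{]}{0pt}{}{0}{k}=\genfrac{\{}{\}}{0pt}{}{0}{k}=0$ for $k\ge2$, leaving only $j=n$. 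Hsu--Shiue prove (4) in far greater generality; your argument buys a short elementary proof of exactly the case the paper uses.

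Two caveats. First, your logarithmic-derivative step needs $r\ge1$ (at $r=0$ the formula $r^{\overline{n}}\left(H_{n+r-1}-H_{r-1}\right)$ is the indeterminate $0\cdot H_{-1}$), so for $\genfrac{[}{]}{0pt}{}{n}{1}=(n-1)!$ in part (1) you should read the $x$-coefficient of $x(x+1)\cdots(x+n-1)$ directly as the product of the nonzero shifts; note the identification in (3) still degenerates consistently, since $h_{n}^{(0)}=H_{n}^{(1,0)}=1/n$ gives $n!h_{n}^{(0)}=(n-1)!$. Second, the Kronecker-delta item is misprinted in the paper, and your combinatorial one-liner actually proves the corrected version rather than the printed one: under the paper's own convention, $\genfrac{[}{]}{0pt}{}{n+n}{m+n}_{n}$ is the coefficient of $x^{m}$ in $(x+n)^{\overline{n}}$, and for $n=1$, $m=0$ this equals $1\ne\delta_{0,1}$. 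Broder's identity is $\genfrac{[}{]}{0pt}{}{n}{m}_{n}=\delta_{m,n}$ (upper index $0+n$, lower index $(m-n)+n$ in the shifted template), which is what ``all distinguished elements in distinct cycles forces the identity permutation'' establishes; it also follows from your polynomial framework, since $(x+n)^{\overline{0}}=1$ has coefficients $\delta_{k,0}$. Since this item is never used later in the paper, the slip is harmless there, but your write-up should state which version it proves; likewise note that the set-partition and permutation-cycle interpretations you invoke for $\genfrac{\{}{\}}{0pt}{}{n}{n-1}=\binom{n}{2}$ and the delta value are not among the paper's stated definitions, though they are standard and covered by the cited sources.
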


The generating function of the generalized hyperharmonic numbers
$H_{n}^{\left(  p,q\right)  }$ is given by \textbf{\ }%
\begin{equation}
\sum_{n=0}^{\infty}H_{n}^{\left(  p,q\right)  }t^{n}=\frac{Li_{p}\left(
t\right)  }{\left(  1-t\right)  ^{q}} \label{Hgf}%
\end{equation}
(see \cite{DiMeCe}), which reduces to
\begin{equation}
\sum_{n=0}^{\infty}h_{n}^{\left(  q\right)  }t^{n}=-\frac{\ln\left(
1-t\right)  }{\left(  1-t\right)  ^{q}}\label{hhgf}
\end{equation}
(see \cite{CG}), the generating function of the hyperharmonic numbers
$h_{n}^{\left(  q\right)  }$, that is related to the harmonic numbers by%
\begin{equation}
h_{n}^{\left(  q\right)  }=\binom{n+q-1}{q-1}\left(  H_{n+q-1}-H_{q-1}\right)
. \label{4}%
\end{equation}

The poly-Bernoulli polynomials $B_{n}^{\left(  p\right)  }\left(  x\right)  $,
defined in (\ref{gf-pB}),\textbf{ }can be expressed in terms of the
poly-Bernoulli numbers $B_{n}^{\left(  p\right)}=B_{n}^{\left(  p\right)  }\left(  0\right)$ as%
\[
B_{n}^{\left(  p\right)  }\left(  x\right)  =\sum_{k=0}^{n}\binom{n}{k}%
B_{k}^{\left(  p\right)  }x^{n-k}.
\]
The poly-Bernoulli numbers can be also represented in terms of the Stirling
numbers of the second kind by%
\begin{equation}
B_{n}^{\left(  p\right)  }=\left(  -1\right)  ^{n}\sum_{k=0}^{n}%
\genfrac{\{}{\}}{0pt}{}{n}{k}%
\frac{\left(  -1\right)  ^{k}k!}{\left(  k+1\right)  ^{p}}\label{pBs}%
\end{equation}
(see \cite{Kaneko}). Hence%
\begin{equation}
B_{0}^{\left(  p\right)  }=1\text{, }B_{1}^{\left(  p\right)  }=\frac{1}%
{2^{p}}\text{, }B_{0}^{\left(  p\right)  }\left(  x\right)  =1\text{, }%
B_{1}^{\left(  p\right)  }\left(  x\right)  =x+\frac{1}{2^{p}},\label{3}%
\end{equation}
etc. The poly-Bernoulli polynomials and numbers, which are studied recently in
different directions (\cite{Bayad,CY,KoLM,Y2}), are generalizations of the
Bernoulli polynomials $B_{n}\left(  x\right)  $ and the Bernoulli numbers
$B_{n}$ in that $B_{n}^{\left(  1\right)  }\left(  x-1\right)  =B_{n}\left(
x\right)  $\textbf{\ }and $B_{n}^{\left(  1\right)  }=B_{n}$ with
$B_{1}^{\left(  1\right)  }=-B_{1}$.

\section{Generalized hyperharmonic numbers}
We start this section by proving Theorem \ref{teo3}.
\begin{proof}
[Proof of Theorem \ref{teo3}]Using (\ref{gf-pB}) and (\ref{Hgf}) we have
\begin{align}
\sum_{k=0}^{\infty}B_{k}^{\left(  p\right)  }\left(  q+1-r\right)  \frac
{t^{k}}{k!}  &  =\frac{\mathrm{Li}_{p}\left(  1-e^{-t}\right)  }{1-e^{-t}%
}e^{\left(  q+1-r\right)  t}\nonumber\\
&  =\sum_{n=0}^{\infty}\left(  -1\right)  ^{n}n!H_{n+1}^{\left(  p,q+1\right)
}\frac{\left(  e^{-t}-1\right)  ^{n}}{n!}e^{-rt}. \label{14}%
\end{align}
Here by considering (\ref{egfrS}) we obtain%
\[
\sum_{k=0}^{\infty}B_{k}^{\left(  p\right)  }\left(  q+1-r\right)  \frac
{t^{k}}{k!}=\sum_{k=0}^{\infty}\left(  \sum_{n=0}^{k}\left(  -1\right)  ^{k-n}%
\genfrac{\{}{\}}{0pt}{}{k+r}{n+r}%
_{r}n!H_{n+1}^{\left(  p,q+1\right)  }\right)  \frac{t^{k}}{k!}.
\]
Then, comparing the coefficients of $\frac{t^{k}}{k!}$ in the both sides of
the above equation gives%
\begin{equation}
B_{n}^{\left(  p\right)  }\left(  q+1-r\right)  =\sum_{k=0}^{n}\left(
-1\right)  ^{n-k}%
\genfrac{\{}{\}}{0pt}{}{n+r}{k+r}%
_{r}k!H_{k+1}^{\left(  p,q+1\right)  }. \label{pB-gh}%
\end{equation}
Applying the $r$-Stirling transform to the above equation, we obtain the
desired result (\ref{gh-pB}).
\end{proof}

It worths to note recent works \cite{OS1} and \cite{OS2}, which relate the
Stirling numbers of the first kind and poly-Bernoulli numbers in different manner.

The following interesting alternating sum%
\[
-H_{1}+H_{2}-H_{3}+\cdots-H_{2n-1}+H_{2n}=\frac{1}{2}H_{n}%
\]
motivates the next result, which follows from the relation%
\[
\mathrm{Li}_{p}\left(  -t\right)  +\mathrm{Li}_{p}\left(  t\right)
=2^{1-p}\mathrm{Li}_{p}\left(  t^{2}\right).
\]

\begin{proposition}
\label{prop1}We have%
\[
\sum_{k=0}^{2n}\left(  -1\right)  ^{k}\binom{q+k-1}{k}H_{2n-k}^{\left(
p,q\right)  }=\frac{1}{2^{p}}H_{n}^{\left(  p,q\right)  }.
\]
In particular,%
\[
\sum_{k=0}^{2n}\left(  -1\right)  ^{k}H_{2n-k}^{\left(  p\right)  }=\frac
{1}{2^{p}}H_{n}^{\left(  p\right)  }\text{ \ and \ }\sum_{k=0}^{2n}\left(
-1\right)  ^{k}\binom{q+k-1}{k}h_{2n-k}^{\left(  q\right)  }=\frac{1}{2}%
h_{n}^{\left(  q\right)  }.
\]
\end{proposition}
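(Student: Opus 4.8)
The plan is to prove the identity by a generating function argument built on (\ref{Hgf}), exploiting the hint that the left-hand side is an even-index extraction from a suitable Cauchy product. First I would observe that the binomial coefficients appearing in the sum are exactly the Taylor coefficients of $(1+t)^{-q}$, namely
\[
\frac{1}{(1+t)^{q}}=\sum_{k=0}^{\infty}(-1)^{k}\binom{q+k-1}{k}t^{k}.
\]
Multiplying this series by the generating function (\ref{Hgf}) and reading off the Cauchy product, the coefficient of $t^{m}$ in
\[
\frac{1}{(1+t)^{q}}\cdot\frac{\mathrm{Li}_{p}(t)}{(1-t)^{q}}=\frac{\mathrm{Li}_{p}(t)}{(1-t^{2})^{q}}
\]
is precisely $\sum_{k=0}^{m}(-1)^{k}\binom{q+k-1}{k}H_{m-k}^{(p,q)}$. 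Thus the sum in the proposition is the coefficient of $t^{2n}$ in $\mathrm{Li}_{p}(t)/(1-t^{2})^{q}$.

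Next I would isolate the even-index part. Writing $f(t)=\mathrm{Li}_{p}(t)/(1-t^{2})^{q}$ and noting that the denominator is already even in $t$, I have
\[
\sum_{n=0}^{\infty}\Big(\sum_{k=0}^{2n}(-1)^{k}\binom{q+k-1}{k}H_{2n-k}^{(p,q)}\Big)t^{2n}=\frac{f(t)+f(-t)}{2}=\frac{\mathrm{Li}_{p}(t)+\mathrm{Li}_{p}(-t)}{2(1-t^{2})^{q}}.
\]
Applying the relation $\mathrm{Li}_{p}(t)+\mathrm{Li}_{p}(-t)=2^{1-p}\mathrm{Li}_{p}(t^{2})$ collapses the numerator and yields
\[
\frac{1}{2^{p}}\cdot\frac{\mathrm{Li}_{p}(t^{2})}{(1-t^{2})^{q}}=\frac{1}{2^{p}}\sum_{n=0}^{\infty}H_{n}^{(p,q)}t^{2n},
\]
where the last equality is (\ref{Hgf}) with $t$ replaced by $t^{2}$.

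Comparing the coefficients of $t^{2n}$ on both sides then gives the claimed identity. The two special cases follow immediately: setting $q=1$ makes $\binom{q+k-1}{k}=1$ and $H_{n}^{(p,1)}=H_{n}^{(p)}$, while setting $p=1$ makes $2^{-p}=1/2$ and $H_{n}^{(1,q)}=h_{n}^{(q)}$. I do not anticipate a genuine obstacle here; the only point requiring care is the bookkeeping in the even-index extraction---specifically verifying that $(1-t^{2})^{q}$ contributes nothing to the sign change, so that $f(-t)$ differs from $f(t)$ only through $\mathrm{Li}_{p}(-t)$---after which the polylogarithm relation does all the work.
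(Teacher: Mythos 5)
Your proof is correct and follows exactly the route the paper intends: the paper gives no written proof for this proposition beyond the remark that it "follows from the relation $\mathrm{Li}_{p}(-t)+\mathrm{Li}_{p}(t)=2^{1-p}\mathrm{Li}_{p}(t^{2})$," and your argument—multiplying the generating function (\ref{Hgf}) by $(1+t)^{-q}$, extracting the even part, and applying that duplication formula—is precisely the fleshed-out version of that hint. All steps check out, including the special cases.
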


\begin{proposition}
We have%
\[
H_{n}^{\left(  p,q+1\right)  }=\sum_{k=0}^{n}\left(  -1\right)  ^{k}%
\binom{p-q}{k}H_{n-k}^{\left(  p,p+1\right)  }.
\]
In particular,%
\[
H_{n}^{\left(  p\right)  }=\sum_{k=0}^{n}\left(  -1\right)  ^{k}\binom{p}%
{k}H_{n-k}^{\left(  p,p+1\right)  }.
\]

\end{proposition}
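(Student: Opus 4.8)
The plan is to read the identity off directly from the generating function (\ref{Hgf}) via a single Cauchy product. First I would record the generating function of the left-hand side sequence, namely
\[
\sum_{n=0}^{\infty}H_{n}^{\left(  p,q+1\right)  }t^{n}=\frac{\mathrm{Li}_{p}\left(  t\right)  }{\left(  1-t\right)  ^{q+1}},
\]
together with the generating function of the sequence $H_{n}^{\left(  p,p+1\right)  }$ appearing on the right,
\[
\sum_{n=0}^{\infty}H_{n}^{\left(  p,p+1\right)  }t^{n}=\frac{\mathrm{Li}_{p}\left(  t\right)  }{\left(  1-t\right)  ^{p+1}},
\]
both instances of (\ref{Hgf}).

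The key observation is the purely algebraic factorization
\[
\frac{\mathrm{Li}_{p}\left(  t\right)  }{\left(  1-t\right)  ^{q+1}}=\frac{\mathrm{Li}_{p}\left(  t\right)  }{\left(  1-t\right)  ^{p+1}}\,\left(  1-t\right)  ^{p-q},
\]
so that the left generating function is the product of the right one with $\left(  1-t\right)  ^{p-q}$. Next I would expand this second factor by the generalized binomial theorem,
\[
\left(  1-t\right)  ^{p-q}=\sum_{k=0}^{\infty}\binom{p-q}{k}\left(  -t\right)  ^{k}=\sum_{k=0}^{\infty}\left(  -1\right)  ^{k}\binom{p-q}{k}t^{k},
\]
valid as a formal power series for any integer exponent $p-q$.

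Finally I would form the Cauchy product of the series $\sum_{m}H_{m}^{\left(  p,p+1\right)  }t^{m}$ and $\sum_{k}\left(  -1\right)  ^{k}\binom{p-q}{k}t^{k}$ and compare the coefficient of $t^{n}$ with that of $\mathrm{Li}_{p}\left(  t\right)  /\left(  1-t\right)  ^{q+1}$, which is $H_{n}^{\left(  p,q+1\right)  }$. This yields the claimed identity at once. The particular case follows on setting $q=0$ and recalling $H_{n}^{\left(  p,1\right)  }=H_{n}^{\left(  p\right)  }$, whence $\binom{p-q}{k}$ becomes $\binom{p}{k}$.

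The computation is entirely routine; the only point needing a word of care is the status of $\binom{p-q}{k}$ when $p<q$, where the expansion of $\left(  1-t\right)  ^{p-q}$ is a genuine infinite series rather than a terminating one. In that case the stated upper summation limit $n$ is exactly the correct truncation for extracting the coefficient of $t^{n}$, and no convergence issue arises because the manipulation takes place at the level of formal power series. When $p\geq q$ the binomial series terminates at $k=p-q$ and the terms with $k>p-q$ vanish automatically, so the formula is unaffected.
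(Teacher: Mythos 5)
Your proposal is correct and follows exactly the paper's own argument: factor the generating function as $\mathrm{Li}_{p}(t)/(1-t)^{p+1}\cdot(1-t)^{p-q}$, expand the second factor binomially, and compare coefficients in the Cauchy product. Your additional remark on the non-terminating case $p<q$ is a sensible clarification but does not change the substance.
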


\begin{proof}
We have%
\begin{align*}
\sum_{n=0}^{\infty}H_{n}^{\left(  p,q+1\right)  }t^{n}  &  =\frac
{\mathrm{Li}_{p}\left(  t\right)  }{\left(  1-t\right)  ^{p+1}}\left(
1-t\right)  ^{p-q}\\
&  =\sum_{n=0}^{\infty}\left(  \sum_{k=0}^{n}H_{k}^{\left(  p,p+1\right)
}\binom{p-q}{n-k}\left(  -1\right)  ^{n-k}\right)  t^{n},
\end{align*}
from which the desired result follows.
\end{proof}

We now turn our attention to the sum%
\[
\sum_{k=1}^{n}%
\genfrac{[}{]}{0pt}{}{n+r}{k+r}%
_{r}\left.  \frac{d^{l}}{dx^{l}}B_{k}\left(  x\right)  \right\vert _{x=r},
\]
which is a more general form of (\ref{1.1}), (\ref{1.2}) and (\ref{6}). To
evaluate this sum we first recall that
\begin{equation}
\sum_{k=0}^{\infty}\binom{m+k}{m}P\left(  r,m+k,m\right)  t^{k}=\frac{\left(
-\ln\left(  1-t\right)  \right)  ^{r}}{\left(  1-t\right)  ^{m+1}}
\label{Agf}%
\end{equation}
(see \cite{Z}), where
\[
P\left(  r,m+k,m\right)  =P_{r}\left(  H_{m+k}^{\left(  1\right)  }%
-H_{m}^{\left(  1\right)  },H_{m+k}^{\left(  2\right)  }-H_{m}^{\left(
2\right)  },\ldots,H_{m+k}^{\left(  r\right)  }-H_{m}^{\left(  r\right)
}\right)  ,
\]
and the polynomial $P_{n}\left(  x_{1},x_{2},\ldots,x_{n}\right)  $ is defined
by $P_{0}=1$ and
\[
P_{n}\left(  x_{1},x_{2},\ldots,x_{n}\right)  =\left(  -1\right)  ^{n}%
Y_{n}\left(  -0!x_{1},-1!x_{2},\ldots,-\left(  n-1\right)  !x_{n}\right)  ,
\]
where $Y_{n}$ is the exponential Bell polynomial \cite{C1974}. A first few of
them may be listed as
$ P_{1}\left(  x_{1}\right)  =x_{1},\text{ }P_{2}\left(  x_{1},x_{2}\right)
=x_{1}^{2}-x_{2},\text{ }P_{3}\left(  x_{1},x_{2},x_{3}\right)  =x_{1}%
^{3}-3x_{1}x_{2}+2x_{3}.$

\begin{theorem}
\label{teoder}For nonnegative integers $q,r,$ and $n$, we have%
\begin{align}
&  \sum_{k=l}^{n}%
\genfrac{[}{]}{0pt}{}{n+r}{k+r}%
_{r}k\left(  k-1\right)  \cdots\left(  k-l+1\right)  B_{k-l}\left(  q\right)
\nonumber\\
&  \ =n!\binom{n+q+r-1}{q+r-2}P\left(  l+1,n+q+r-1,q+r-2\right)  . \label{6a}%
\end{align}

\end{theorem}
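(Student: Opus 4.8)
The plan is to build the exponential generating function of the left-hand side in the variable $n$ and then read off its coefficients using (\ref{Agf}). First I would observe that, since $B_{k}'(x)=kB_{k-1}(x)$, the summand satisfies $k(k-1)\cdots(k-l+1)B_{k-l}(q)=\left.\frac{d^{l}}{dx^{l}}B_{k}(x)\right|_{x=q}$; as this $l$-th derivative annihilates every $B_{k}$ with $k<l$, the lower limit may be taken as $k=0$ without changing the sum. Writing $A_{n}$ for the left-hand side of (\ref{6a}), I would then form $\sum_{n\geq0}A_{n}t^{n}/n!$ and interchange the two summations, so that for each fixed $k$ the factor $\sum_{n\geq k}\genfrac{[}{]}{0pt}{}{n+r}{k+r}_{r}t^{n}/n!$ is isolated.

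The next step is to record the exponential generating function of the first-kind $r$-Stirling numbers,
\[
\sum_{n\geq k}\genfrac{[}{]}{0pt}{}{n+r}{k+r}_{r}\frac{t^{n}}{n!}=\frac{1}{(1-t)^{r}}\frac{(-\ln(1-t))^{k}}{k!},
\]
which I would obtain by comparing the coefficient of $x^{k}$ on the two sides of $(1-t)^{-(x+r)}=\sum_{n}(x+r)^{\overline{n}}t^{n}/n!$, using the defining expansion $(x+r)^{\overline{n}}=\sum_{k}\genfrac{[}{]}{0pt}{}{n+r}{k+r}_{r}x^{k}$. Substituting this and setting $u=-\ln(1-t)$ (so that $e^{-u}=1-t$), the inner sum over $k$ collapses to the $l$-th $x$-derivative at $x=q$ of the Bernoulli generating function $\sum_{k}B_{k}(x)u^{k}/k!=ue^{xu}/(e^{u}-1)$, that is, to $u^{l+1}e^{qu}/(e^{u}-1)$. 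Using $e^{u}=(1-t)^{-1}$, $e^{u}-1=t/(1-t)$ and $e^{qu}=(1-t)^{-q}$, the generating function simplifies to
\[
\sum_{n\geq0}A_{n}\frac{t^{n}}{n!}=\frac{(-\ln(1-t))^{l+1}}{t\,(1-t)^{q+r-1}}.
\]

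Finally I would invoke (\ref{Agf}) with its first parameter equal to $l+1$ and with $m=q+r-2$, expanding $(-\ln(1-t))^{l+1}/(1-t)^{q+r-1}$ as $\sum_{k\geq0}\binom{q+r-2+k}{q+r-2}P(l+1,q+r-2+k,q+r-2)t^{k}$. Dividing by $t$ shifts the index by one; the $k=0$ term contributes nothing because $P(l+1,q+r-2,q+r-2)=P_{l+1}(0,\ldots,0)=0$, so after setting $n=k-1$ the coefficient of $t^{n}$ is $\binom{n+q+r-1}{q+r-2}P(l+1,n+q+r-1,q+r-2)$. Equating this with $A_{n}/n!$ gives exactly (\ref{6a}).

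The conceptual heart is the substitution $u=-\ln(1-t)$, which converts the mixed Bernoulli and $r$-Stirling data into precisely the logarithmic-over-power shape that (\ref{Agf}) is designed to decode. I expect the main obstacle to be the bookkeeping around the stray factor $1/t$ and the resulting index shift, together with confirming that no spurious low-order terms survive: one must check both that the derivative kills the summands with $k<l$ and that the boundary term $P_{l+1}(0,\ldots,0)$ vanishes, so that the two generating functions genuinely agree coefficient by coefficient.
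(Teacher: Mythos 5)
Your argument is correct, and every step checks out: the $r$-Stirling generating function $\sum_{n\geq k}\genfrac{[}{]}{0pt}{}{n+r}{k+r}_{r}t^{n}/n!=(1-t)^{-r}(-\ln(1-t))^{k}/k!$ is right, the substitution $u=-\ln(1-t)$ does collapse the inner sum to $u^{l+1}e^{qu}/(e^{u}-1)$, the simplification to $(-\ln(1-t))^{l+1}/\bigl(t(1-t)^{q+r-1}\bigr)$ is correct, and the index shift after dividing by $t$ is legitimate because $P_{l+1}(0,\ldots,0)=0$. However, your route is genuinely different from the paper's. The paper does not recompute the exponential generating function of the sum from scratch; it specializes its Theorem \ref{teo3} to $p=1$ (using $B_{k}^{(1)}(x-1)=B_{k}(x)$) to get $\frac{1}{n!}\sum_{k}\genfrac{[}{]}{0pt}{}{n+r}{k+r}_{r}B_{k}(x)=h_{n+1}^{(x+r-1)}$, differentiates this identity $l$ times termwise via $B_{k}'(x)=kB_{k-1}(x)$, and then evaluates the right-hand side by the formula $\left.\frac{d^{l}}{dx^{l}}h_{n}^{(x+1)}\right|_{x=q}=\binom{q+n}{q}P(l+1,q+n,q)$, itself obtained by differentiating the hyperharmonic generating function (\ref{hhgf}) and reading coefficients from (\ref{Agf}). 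Both proofs ultimately decode the same shape $(-\ln(1-t))^{l+1}/(1-t)^{m+1}$ through (\ref{Agf}), but the paper's detour through hyperharmonic numbers is shorter given Theorem \ref{teo3} and produces the derivative formula (\ref{7}) as a by-product of independent interest (it answers a question about higher derivatives of $h_{n}^{(x)}$), whereas your computation is self-contained and makes no appeal to Theorem \ref{teo3} or to hyperharmonic numbers at all --- a reasonable trade-off if one wanted this theorem in isolation.
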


\begin{proof}
Differentiating both sides of (\ref{hhgf}) with respect to $x$ $l$ times gives
\[
\sum_{n=0}^{\infty}\frac{d^{l}}{dx^{l}}h_{n}^{\left(  x+1\right)  }t^{n}%
=\frac{\left(  -\ln\left(  1-t\right)  \right)  ^{l+1}}{\left(  1-t\right)
^{x+1}}.
\]
Setting $x=q$ in the above equation and using (\ref{Agf}), we see that
\begin{equation}
\left.  \frac{d^{l}}{dx^{l}}h_{n}^{\left(  x+1\right)  }\right\vert
_{x=q}=\binom{q+n}{q}P\left(  l+1,q+n,q\right)  . \label{7}%
\end{equation}
For $p=1$ and $q\rightarrow x-1,$ with the use of $B_{k}^{\left(  1\right)
}\left(  x-1\right)  =B_{k}\left(  x\right)  ,$ (\ref{gh-pB}) turns into
\[
\frac{1}{n!}\sum_{k=0}^{n}%
\genfrac{[}{]}{0pt}{}{n+r}{k+r}%
_{r}B_{k}\left(  x\right)  =h_{n+1}^{\left(  x+r-1\right)  }.
\]
Iterating $B_{k}^{\prime}\left(  x\right)  =kB_{k-1}\left(  x\right)
$\textbf{\ }$l$ times on the left-hand side and observing (\ref{7}) for the
right-hand side\textbf{\ }yield the closed formula (\ref{6a}).
\end{proof}

It is seen that (\ref{6a}) reduces to (\ref{1.1}) for $r=q=1$ and $l=0$, and
to (\ref{6}) for $l=1$. Another demonstration of (\ref{6a}) is the following
example corresponding to $l=2$:%
\begin{align*}
&  \frac{1}{n!}\sum_{k=2}^{n}%
\genfrac{[}{]}{0pt}{}{n+r}{k+r}%
_{r}k\left(  k-1\right)  B_{k-2}\left(  q\right) \\
&  =\binom{n+q+r-1}{q+r-2}\left\{  \left(  H_{n+q+r-1}-H_{q+r-2}\right)
^{3}+2\left(  H_{n+q+r-1}^{\left(  3\right)  }-H_{q+r-2}^{\left(  3\right)
}\right)  \right. \\
&  \ \left.  \qquad\qquad\qquad\qquad\quad-3\left(  H_{n+q+r-1}-H_{q+r-2}%
\right)  \left(  H_{n+q+r-1}^{\left(  2\right)  }-H_{q+r-2}^{\left(  2\right)
}\right)  \right\}  ,
\end{align*}
in particular for $r=q=1$%
\[
\frac{1}{n!}\sum_{k=2}^{n}\left(  -1\right)  ^{k}%
\genfrac{[}{]}{0pt}{}{n+1}{k+1}%
k\left(  k-1\right)  B_{k-2}=\left(  H_{n+1}\right)  ^{3}-3H_{n+1}%
H_{n+1}^{\left(  2\right)  }+2H_{n+1}^{\left(  3\right)  }.
\]
\begin{remark}
We note that (\ref{7})\textbf{\ }gives an answer to the question raised in
\cite{Dil,Mez} on the general form of higher order derivatives of
$h_{n}^{\left(  x\right)  }$ with respect to $x$.
\end{remark}

\section{Hyper-sums of powers}
The hyper-sums of powers of integers $S_{p}^{\left(  q\right)  }\left(
n\right)  $ are defined recursively by%
\[
S_{p}^{\left(  q\right)  }\left(  n\right)  =\sum_{k=1}^{n}S_{p}%
^{(q-1)}\left(  k\right)  ,
\]
with the initial condition $S_{p}^{\left(  0\right)  }\left(  n\right)
=S_{p}\left(  n\right)  $. We note that $S_{p}^{\left(  q\right)  }\left(
n\right)  $ satisfies the relation
\[
S_{p}^{\left(  q\right)  }\left(  n\right)  =\sum_{k=1}^{n}\binom{n+q-k}%
{q}k^{p}
\]
(see \cite{Laissaoui-Bounebirat-Rahmani2017}). We also observe that
\[
H_{n}^{\left(  p,q+1\right)  }=\sum_{k=1}^{n}\binom{n+q-k}{q}\frac{1}{k^{p}}
\]
(see \cite[p. 1648]{DiMeCe}). These simply imply the relation%

\begin{equation}
H_{n}^{\left(  -p,q+1\right)  }=S_{p}^{\left(  q\right)  }\left(  n\right)  .
\label{HaHS}%
\end{equation}
Therefore, we can translate the results given for the generalized
hyperharmonic numbers $H_{n}^{\left(  p,q\right)  }$ to the hyper-sums
$S_{p}^{\left(  q\right)  }\left(  n\right)  $. For instance,
\begin{align*}
S_{p}^{\left(  q\right)  }\left(  n\right)   &  =\frac{1}{2^{p}}\sum
_{k=0}^{2n}\binom{q+2n-k}{2n-k}\left(  -1\right)  ^{k}S_{p}^{\left(  q\right)
}\left(  k\right)  ,\\
S_{p}^{\left(  q\right)  }\left(  n\right)   &  =\sum_{k=0}^{n}\binom
{n-k+p+q-1}{n-k}S_{p}^{\left(  p\right)  }\left(  k\right)  ,
\end{align*}
which follow from Proposition \ref{prop1}. In particular, we have%
\[
S_{p}\left(  n\right)  =\frac{1}{2^{p}}\sum_{k=0}^{2n}\left(  -1\right)
^{k}S_{p}\left(  k\right)  \text{\ and }S_{p}\left(  n\right)  =\sum_{k=0}%
^{n}\binom{n-k+p-1}{n-k}S_{p}^{\left(  p\right)  }\left(  k\right)  .
\]
Moreover, using the duality theorem $B_{n}^{\left(  -p\right)  }%
=B_{p}^{\left(  -n\right)  }$ for the poly-Bernoulli numbers \cite[Theorem
2]{Kaneko} in (\ref{hpb}), we obtain an evaluation formula for sums of powers
of integers:
\[
\frac{1}{n!}\sum_{k=0}^{n}%
\genfrac{[}{]}{0pt}{}{n+1}{k+1}%
B_{p}^{\left(  -k\right)  }=S_{p}\left(  n+1\right)  .
\]

Our next result utilizes the following lemma, which is a polynomial extension
of the Arakawa-Kaneko formula
\[
B_{n}^{\left(  -p\right)  }=\sum_{j=0}^{\min\left\{  n,p\right\}  }\left(
j!\right)  ^{2}
\genfrac{\{}{\}}{0pt}{}{p+1}{j+1}
\genfrac{\{}{\}}{0pt}{}{n+1}{j+1}%
\]
(see \cite[Theorem 2]{AK}).

\begin{lemma}
We have%
\begin{equation}
B_{n}^{\left(  -p\right)  }\left(  q\right)  =\sum_{j=0}^{\min\left\{  n,p\right\}  }\left(  j!\right)
^{2}
\genfrac{\{}{\}}{0pt}{}{p+1}{j+1}
\genfrac{\{}{\}}{0pt}{}{n+q+1}{j+q+1}
_{q+1}. \label{pB-s}%
\end{equation}

\end{lemma}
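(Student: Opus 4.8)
The plan is to read the statement off the generating function (\ref{gf-pB}) with $p$ replaced by $-p$, namely
\[
\sum_{n=0}^{\infty}B_{n}^{\left(-p\right)}\left(q\right)\frac{t^{n}}{n!}=\frac{\mathrm{Li}_{-p}\left(1-e^{-t}\right)}{1-e^{-t}}e^{qt},
\]
and to rewrite the kernel $\mathrm{Li}_{-p}(1-e^{-t})/(1-e^{-t})$ so that, after multiplication by $e^{qt}$, it becomes a superposition of the exponential generating functions (\ref{egfrS}) of the $r$-Stirling numbers of the second kind with $r=q+1$. The crucial ingredient is the closed form
\[
\mathrm{Li}_{-p}\left(z\right)=\sum_{j=0}^{p}j!\genfrac{\{}{\}}{0pt}{}{p+1}{j+1}\frac{z^{j+1}}{\left(1-z\right)^{j+1}},
\]
in which the shifted Stirling number $\genfrac{\{}{\}}{0pt}{}{p+1}{j+1}$ (rather than the $\genfrac{\{}{\}}{0pt}{}{p}{j}$ produced by the naive expansion $n^{p}=\sum_{j}\genfrac{\{}{\}}{0pt}{}{p}{j}n^{\underline{j}}$) is exactly what will force the index $r=q+1$ in (\ref{egfrS}) and hence the factor $\genfrac{\{}{\}}{0pt}{}{n+q+1}{j+q+1}_{q+1}$ in (\ref{pB-s}).

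To establish this closed form I would use the Euler operator $\theta=z\,d/dz$, for which $\theta z^{n}=nz^{n}$ gives $\mathrm{Li}_{-p}(z)=\theta^{p}\bigl(z/(1-z)\bigr)$. Substituting $u=z/(1-z)$ converts $\theta$ into $u(1+u)\,d/du$, so the claim reduces to
\[
\left(u\left(1+u\right)\frac{d}{du}\right)^{p}u=\sum_{j=0}^{p}j!\genfrac{\{}{\}}{0pt}{}{p+1}{j+1}u^{j+1},
\]
which I would prove by induction on $p$. Applying $u(1+u)\,d/du$ to $u^{j+1}$ yields $(j+1)u^{j+1}+(j+1)u^{j+2}$, and collecting the coefficient of $u^{m+1}$ produces $(m+1)!\genfrac{\{}{\}}{0pt}{}{p+1}{m+1}+m!\genfrac{\{}{\}}{0pt}{}{p+1}{m}$; dividing by $m!$, this is precisely the right-hand side of the Stirling recurrence $\genfrac{\{}{\}}{0pt}{}{p+2}{m+1}=(m+1)\genfrac{\{}{\}}{0pt}{}{p+1}{m+1}+\genfrac{\{}{\}}{0pt}{}{p+1}{m}$, which closes the induction.

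With the closed form in hand I would set $z=1-e^{-t}$, so that $1-z=e^{-t}$, $e^{t}-1=z/(1-z)$ and $1/(1-z)=e^{t}$; this turns the kernel into
\[
\frac{\mathrm{Li}_{-p}\left(1-e^{-t}\right)}{1-e^{-t}}=e^{t}\sum_{j=0}^{p}j!\genfrac{\{}{\}}{0pt}{}{p+1}{j+1}\left(e^{t}-1\right)^{j}.
\]
Multiplying by $e^{qt}$ and invoking (\ref{egfrS}) with $r=q+1$ in the form $\left(e^{t}-1\right)^{j}e^{(q+1)t}=j!\sum_{n}\genfrac{\{}{\}}{0pt}{}{n+q+1}{j+q+1}_{q+1}t^{n}/n!$, I would interchange the two summations and compare coefficients of $t^{n}/n!$, which gives (\ref{pB-s}). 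The summation range contracts to $\min\{n,p\}$ because $\genfrac{\{}{\}}{0pt}{}{p+1}{j+1}$ vanishes for $j>p$ while the $r$-Stirling factor $\genfrac{\{}{\}}{0pt}{}{n+q+1}{j+q+1}_{q+1}$ vanishes for $j>n$; putting $q=0$ returns the Arakawa--Kaneko formula as a consistency check.

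I expect the main obstacle to be the closed form for $\mathrm{Li}_{-p}$ with the correctly shifted Stirling number: the obvious expansion of $n^{p}$ produces $\genfrac{\{}{\}}{0pt}{}{p}{j}$ and, after $z=1-e^{-t}$, leads to $r$-Stirling numbers of index $q+2$ rather than $q+1$, i.e.\ to a different (though equal) representation that does not match (\ref{pB-s}). The induction above is what delivers the $\genfrac{\{}{\}}{0pt}{}{p+1}{j+1}$ needed to land exactly on the stated identity. Alternatively one could bypass the induction by combining the already stated Arakawa--Kaneko formula for $B_{k}^{(-p)}$ with the expansion $B_{n}^{(-p)}(q)=\sum_{k}\binom{n}{k}B_{k}^{(-p)}q^{n-k}$ and the auxiliary identity $\sum_{k}\binom{n}{k}q^{n-k}\genfrac{\{}{\}}{0pt}{}{k+1}{j+1}=\genfrac{\{}{\}}{0pt}{}{n+q+1}{j+q+1}_{q+1}$, the latter itself following from (\ref{egfrS}).
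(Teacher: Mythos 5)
Your proof is correct, but it takes a genuinely different route from the paper's. The paper proves the lemma with a bivariate generating function: it sums the generating function (\ref{Hgf}) of $H_{n}^{(-p,q+1)}$ over $p$ against $y^{p}/p!$, substitutes $t\rightarrow 1-e^{-t}$ to arrive at the symmetric kernel $(1-e^{-t})\,e^{(q+2)t+y}/\bigl(1-(1-e^{t})(1-e^{y})\bigr)$ (the polynomial analogue of Kaneko's closed formula), expands the denominator geometrically into $\sum_{j}(1-e^{t})^{j}(1-e^{y})^{j}$, and then reads off \emph{both} Stirling factors by applying (\ref{egfrS}) once in $t$ and once in $y$, finally returning to $B_{n}^{(-p)}(q)$ through (\ref{14}). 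You instead keep $p$ fixed and work entirely in the single variable $t$: you establish the closed form $\mathrm{Li}_{-p}(z)=\sum_{j=0}^{p}j!\genfrac{\{}{\}}{0pt}{}{p+1}{j+1}z^{j+1}/(1-z)^{j+1}$ by induction via the Euler operator (your recurrence computation is right, and the formula checks out for small $p$; it is essentially the expansion the paper itself quotes in the remark following Theorem 4.6), substitute $z=1-e^{-t}$ so the kernel becomes $e^{t}\sum_{j}j!\genfrac{\{}{\}}{0pt}{}{p+1}{j+1}(e^{t}-1)^{j}$, and apply (\ref{egfrS}) with $r=q+1$ once. Your argument is more elementary and self-contained — it avoids the double series and the interchange-of-summation bookkeeping in the paper's two displayed identities, and the truncation to $\min\{n,p\}$ falls out cleanly from the vanishing of the two Stirling factors — while the paper's bivariate approach makes the $(j!)^{2}$ symmetry between the roles of $n$ and $p$ visible at a glance and ties the lemma directly to the Arakawa--Kaneko symmetric formula it generalizes. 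Your closing alternative (binomial expansion of $B_{n}^{(-p)}(q)$ against the Arakawa--Kaneko formula plus the convolution identity for $\genfrac{\{}{\}}{0pt}{}{n+q+1}{j+q+1}_{q+1}$) is also valid and would work.
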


\begin{proof}
We can write (\ref{Hgf}) in the form
\[
\sum_{p=0}^{\infty}\sum_{k=1}^{\infty}\left(  -1\right)  ^{k}H_{k}^{\left(
-p,q+1\right)  }\left(  e^{-t}-1\right)  ^{k}\frac{y^{p}}{p!}=\left(
1-e^{-t}\right)  \frac{e^{\left(  q+2\right)  t+y}}{1-\left(  1-e^{t}\right)
\left(  1-e^{y}\right)  }%
\]
by setting $t\rightarrow1-e^{-t}$. On the other hand, we equivalently have
\[
  \sum_{p=0}^{\infty}\sum_{k=0}^{\infty}\left(  -1\right)  ^{k}%
H_{k+1}^{\left(  -p,q+1\right)  }\frac{\left(  e^{-t}-1\right)  ^{k}}{e^{rt}}%
\frac{y^{p}}{p!}
  =\sum_{j=0}^{\infty}\left(  j!\right)  ^{2}\frac{\left(  1-e^{t}\right)
^{j}}{j!}e^{\left(  q+2-r\right)  t}\frac{\left(  1-e^{y}\right)  ^{j}}%
{j!}e^{y}.
\]
Last equality, (\ref{14}) and (\ref{egfrS}) give the desired result.
\end{proof}

Now, taking $-p$ in (\ref{gh-pB}) and using (\ref{pB-s}) yield
\[
n!H_{n+1}^{\left(  -p,q+r\right)  }    =\sum_{j=0}^{n}\left(  j!\right)  ^{2}%
\genfrac{\{}{\}}{0pt}{}{p+1}{j+1}%
\sum_{k=j}^{n}%
\genfrac{[}{]}{0pt}{}{n+r}{k+r}%
_{r}%
\genfrac{\{}{\}}{0pt}{}{k+q+1}{j+q+1}%
_{q+1}.
\]
Utilizing the formula
\[
\sum_{k=j}^{n}%
\genfrac{[}{]}{0pt}{}{n+r}{k+r}%
_{r}%
\genfrac{\{}{\}}{0pt}{}{k+s}{j+s}%
_{s}=\frac{n!}{j!}\binom{n+r+s-1}{j+r+s-1}
\]
(see \cite[Theorems 3.7 and 3.11]{NyRa}), we obtain the following.

\begin{theorem}
\label{teo4}For non-negative integers $p$, $q$, and $n$, we have%
\begin{equation}
S_{p}^{\left(  q\right)  }\left(  n\right)  =\sum_{j=0}^{p}j!%
\genfrac{\{}{\}}{0pt}{}{p+1}{j+1}%
\binom{n+q}{j+q+1}. \label{8}%
\end{equation}

\end{theorem}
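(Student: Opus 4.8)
The plan is to derive the closed formula for the hyper-sum $S_{p}^{(q)}(n)$ by combining the three ingredients that the excerpt has just assembled: the evaluation of $n!\,H_{n+1}^{(-p,q+r)}$ coming from Theorem~\ref{teo3} specialized to negative upper index $-p$, the polynomial Arakawa--Kaneko expansion (\ref{pB-s}) for $B_{n}^{(-p)}(q)$, and the mixed Stirling convolution identity from \cite{NyRa}. The strategy is therefore purely algebraic: I would start from the identity already displayed immediately before the theorem,
\[
n!\,H_{n+1}^{(-p,q+r)}=\sum_{j=0}^{n}(j!)^{2}
\genfrac{\{}{\}}{0pt}{}{p+1}{j+1}
\sum_{k=j}^{n}
\genfrac{[}{]}{0pt}{}{n+r}{k+r}
_{r}
\genfrac{\{}{\}}{0pt}{}{k+q+1}{j+q+1}
_{q+1},
\]
and collapse the inner sum over $k$ using the stated convolution
$\sum_{k=j}^{n}\genfrac{[}{]}{0pt}{}{n+r}{k+r}_{r}\genfrac{\{}{\}}{0pt}{}{k+s}{j+s}_{s}=\frac{n!}{j!}\binom{n+r+s-1}{j+r+s-1}$.

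First I would set $s=q+1$ in the convolution identity, so that the inner sum over $k$ becomes $\frac{n!}{j!}\binom{n+r+q}{j+r+q}$; substituting this back leaves
\[
n!\,H_{n+1}^{(-p,q+r)}=\sum_{j=0}^{n}(j!)^{2}
\genfrac{\{}{\}}{0pt}{}{p+1}{j+1}
\frac{n!}{j!}\binom{n+r+q}{j+r+q}
=n!\sum_{j=0}^{n}j!\,
\genfrac{\{}{\}}{0pt}{}{p+1}{j+1}
\binom{n+r+q}{j+r+q}.
\]
Next I would specialize the index $r$: the correspondence (\ref{HaHS}) reads $H_{n}^{(-p,q+1)}=S_{p}^{(q)}(n)$, so to produce $S_{p}^{(q)}$ I want the hyperharmonic superscript to be exactly $q+1$. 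Writing $H_{n+1}^{(-p,q+r)}$ and choosing $r$ so that $q+r$ matches the desired second parameter, then shifting $n\to n-1$ if needed, converts the left-hand side into $S_{p}^{(q)}(n)$. After cancelling the common factor $n!$ and rewriting the binomial $\binom{n+r+q}{j+r+q}=\binom{n+r+q}{\,n-j\,}$ in the equivalent form $\binom{n+q}{j+q+1}$ that appears in (\ref{8}), the formula falls out.

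The one genuine subtlety — and the step I expect to be the main obstacle — is bookkeeping the index shifts correctly: the displayed pre-theorem identity carries the free parameter $r$ and the argument $n+1$, whereas (\ref{8}) has no $r$ and the clean binomial $\binom{n+q}{j+q+1}$. I would reconcile these by taking $r=1$, noting that $H_{n+1}^{(-p,q+1)}=S_{p}^{(q)}(n+1)$ via (\ref{HaHS}), and then verifying that $\binom{n+1+q}{j+q+1}$ after the replacement $n\to n-1$ becomes precisely $\binom{n+q}{j+q+1}$, matching the claimed summand; one must also check that the upper summation limit safely reduces from $n$ to $p$ because $\genfrac{\{}{\}}{0pt}{}{p+1}{j+1}=0$ for $j>p$. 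Once these index alignments are pinned down, the remainder is the routine cancellation already sketched, and no analytic input beyond the cited Stirling convolution is required.
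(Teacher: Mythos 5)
Your proposal is correct and takes essentially the same route as the paper: the paper obtains (\ref{8}) precisely by collapsing the inner sum in the displayed identity for $n!H_{n+1}^{\left(-p,q+r\right)}$ via the Nyul--R\'{a}cz convolution with $s=q+1$ and then matching parameters through (\ref{HaHS}); your index bookkeeping (taking $r=1$, shifting $n\to n-1$, and noting that the extra terms vanish because either the Stirling number or the binomial coefficient is zero) is exactly the intended reconciliation.
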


\begin{remark}
We note that a slightly different form of (\ref{8}) can be found in
\cite{Cereceda2, Cereceda}.
\end{remark}

Equation (\ref{8}) provides a natural extension of the expression
\[
S_{p}\left(  n\right)  =\sum_{j=0}^{p}
j!
\genfrac{\{}{\}}{0pt}{}{p}{j}
\binom{n+1}{j+1}
\]
(see \cite[p. 285]{Knuth}) to $S_{p}^{\left(  q\right)  }\left(  n\right)  $.

Next result is an alternative representation for $S_{p}^{\left(  q\right)
}\left(  n\right)  $, which extends
\begin{equation}
S_{p}\left(  n\right)  =\sum_{j=1}^{p}\left(  -1\right)  ^{p+j}%
\genfrac{\{}{\}}{0pt}{}{p}{j}%
\binom{n+j}{j+1}j! \label{10}%
\end{equation}
(see \cite[p. 285]{Knuth}).

\begin{theorem}
We have%
\begin{equation}
S_{p}^{\left(  q\right)  }\left(  n\right)  =\sum_{j=1}^{p}\left(  -1\right)
^{p+j}
\genfrac{\{}{\}}{0pt}{}{p}{j}
\binom{n+q+j}{q+j+1}j!. \label{11}%
\end{equation}

\end{theorem}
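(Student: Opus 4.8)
The plan is to argue by induction on $q$, using formula (\ref{10}) as the base case $q=0$. Since $S_{p}^{\left(0\right)}\left(n\right)=S_{p}\left(n\right)$ and $\binom{n+j}{j+1}=\binom{n+0+j}{0+j+1}$, the claimed identity (\ref{11}) is exactly (\ref{10}) when $q=0$, so the base case is already in hand.

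For the inductive step I would assume that (\ref{11}) holds with $q$ replaced by $q-1$, and then insert this expression into the defining recurrence $S_{p}^{\left(q\right)}\left(n\right)=\sum_{k=1}^{n}S_{p}^{\left(q-1\right)}\left(k\right)$. After interchanging the two finite summations, the factor $\left(-1\right)^{p+j}\genfrac{\{}{\}}{0pt}{}{p}{j}j!$ does not depend on $k$ and can be pulled outside, so the whole computation reduces to evaluating, for each fixed $j$, the single inner sum $\sum_{k=1}^{n}\binom{k+q-1+j}{q+j}$.

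The key step is then the column-sum (hockey-stick) identity $\sum_{m=r}^{M}\binom{m}{r}=\binom{M+1}{r+1}$. Re-indexing by $m=k+q-1+j$, the inner sum runs over $m=q+j,\ldots,n+q-1+j$; taking $r=q+j$ and $M=n+q-1+j$ gives precisely $\binom{n+q+j}{q+j+1}$. Substituting this back, the range $j=1,\ldots,p$ is preserved and one recovers exactly the right-hand side of (\ref{11}) with parameter $q$, which closes the induction.

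The argument is essentially mechanical, and I do not expect a genuine obstacle; the only point demanding care is the bookkeeping of the shifted indices when applying the hockey-stick identity, so that the lower limit aligns with $r=q+j$ and no spurious boundary term is introduced. One could alternatively derive (\ref{11}) directly from Theorem \ref{teo4} by matching the two Stirling-number representations via the recurrence $\genfrac{\{}{\}}{0pt}{}{p+1}{j+1}=\genfrac{\{}{\}}{0pt}{}{p}{j}+\left(j+1\right)\genfrac{\{}{\}}{0pt}{}{p}{j+1}$, but the inductive route above is shorter and more transparent.
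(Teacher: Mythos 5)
Your induction is correct, and it is a genuinely different argument from the one in the paper. You anchor at the classical Faulhaber--Knuth formula (\ref{10}) (which is the $q=0$ case of (\ref{11})), feed the inductive hypothesis into the defining recurrence $S_{p}^{\left(q\right)}\left(n\right)=\sum_{k=1}^{n}S_{p}^{\left(q-1\right)}\left(k\right)$, and collapse the inner sum $\sum_{k=1}^{n}\binom{k+q+j-1}{q+j}=\binom{n+q+j}{q+j+1}$ by the hockey-stick identity; the index bookkeeping checks out (the $k=1$ term is $\binom{q+j}{q+j}$, so the column sum starts exactly at $r=q+j$ and no boundary term is lost). The paper instead works with the generating function $\sum_{n\geq0}H_{n}^{\left(-p,q\right)}t^{n}=\mathrm{Li}_{-p}\left(t\right)/\left(1-t\right)^{q}$, expresses $\mathrm{Li}_{-p}$ through the geometric polynomial $w_{p}$, and uses the reflection identity $\left(1+x\right)w_{n}\left(x\right)=x\left(-1\right)^{n}w_{n}\left(-x-1\right)$ together with $w_{p}\left(x\right)=\sum_{j}\genfrac{\{}{\}}{0pt}{}{p}{j}j!x^{j}$ before extracting coefficients. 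Your route is more elementary and self-checking, requiring only the recurrence and a standard binomial identity, whereas the paper's route stays within the generating-function framework it uses throughout (and in particular produces the identity uniformly with the correspondence $H_{n}^{\left(-p,q+1\right)}=S_{p}^{\left(q\right)}\left(n\right)$ already in hand); the only external input you need is (\ref{10}) itself, which the paper also treats as known.
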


\begin{proof}
We set $-p$ for $p$ in (\ref{Hgf}). Then
\[
\sum_{n=0}^{\infty}H_{n}^{\left(  -p,q\right)  }t^{n}=\frac{1}{\left(
1-t\right)  ^{q}}\sum_{k=1}^{\infty}k^{p}t^{k}=\frac{1}{\left(  1-t\right)
^{q+1}}w_{p}\left(  \frac{t}{1-t}\right)  ,
\]
where $w_{n}\left(  x\right)  $ is the $n$th geometric polynomial, defined by
\begin{equation}
\sum_{k=1}^{\infty}k^{n}t^{k}=\frac{1}{1-x}w_{n}\left(  \frac{x}{1-x}\right)
\label{gp}%
\end{equation}
(see \cite{B}). Using (\ref{gp}), the relation $\left(  1+x\right)
w_{n}\left(  x\right)  =x\left(  -1\right)  ^{n}w_{n}\left(  -x-1\right)  ,$
$n>0$, (see \cite[Eq. (22)]{Kargin1}) and
\[
w_{p}\left(  x\right)  =\sum_{j=0}^{p}%
\genfrac{\{}{\}}{0pt}{}{p}{j}%
j!x^{j}
\]
(see \cite{B}), we reach at%
\[
\sum_{n=0}^{\infty}H_{n}^{\left(  -p,q\right)  }t^{n}=\sum_{n=1}^{\infty}%
t^{n}\sum_{j=0}^{p}%
\genfrac{\{}{\}}{0pt}{}{p}{j}%
\binom{n+k+q-1}{n-1}\left(  -1\right)  ^{j+p}j!,
\]
which implies the result.
\end{proof}

\begin{remark}
A slightly different form of (\ref{11}) may be written as%
\[
S_{p}^{\left(  q\right)  }\left(  n\right)  =\sum_{j=0}^{p}\left(  -1\right)
^{p+j}%
\genfrac{\{}{\}}{0pt}{}{p+1}{j+1}%
\binom{n+q+j+1}{q+j+1}j!,
\]
which can be obtained by utilizing
\[
\mathrm{Li}_{-p}\left(  t\right)  =\left(  -1\right)  ^{p+1}\sum_{k=0}^{p}k!%
\genfrac{\{}{\}}{0pt}{}{p+1}{k+1}%
\left(  \frac{-1}{1-t}\right)  ^{k+1}%
\]
in the generating function (\ref{Hgf}).
\end{remark}

Taking into account (\ref{HSq}), it can be seen from (\ref{pB-gh}) and
(\ref{11}) that\textbf{\ }%
\[
B_{n}^{\left(  -p\right)  }\left(  q\right)  =\sum_{j=1}^{p}%
\genfrac{\{}{\}}{0pt}{}{p}{j}%
j!\left(  -1\right)  ^{p+j}\sum_{k=0}^{n}\left(  -1\right)  ^{n-k}%
\genfrac{\{}{\}}{0pt}{}{n+1}{k+1}%
\left(  j+q+2\right)  ^{\overline{k}}.
\]
This reduces to
\begin{equation}
B_{n}^{\left(  -p\right)  }\left(  q\right)  =\sum_{j=1}^{p}%
\genfrac{\{}{\}}{0pt}{}{p}{j}%
\left(  -1\right)  ^{p+j}j!\left(  j+q+1\right)  ^{n} \label{12}%
\end{equation}
utilizing the formula
\[
\left(  x-r\right)  ^{n}=\sum_{n=k}\left(  -1\right)  ^{n-k}%
\genfrac{\{}{\}}{0pt}{}{n+r}{k+r}%
_{r}x^{\overline{k}}%
\]
(see \cite{Broder1984}). (\ref{12}) stands for a polynomial extension of the
Arakawa-Kaneko formula
\[
B_{n}^{\left(  -p\right)  }=\sum_{j=1}^{p}%
\genfrac{\{}{\}}{0pt}{}{p}{j}%
\left(  -1\right)  ^{p+j}j!\left(  j+1\right)  ^{n}%
\]
(cf. \cite{AK}).

We conclude this section by stating further results for hyper-sums and sums of
powers, and some of their consequences.

\begin{theorem}
We have%
\begin{align}
S_{p}^{\left(  q\right)  }\left(  n\right)   &  =\frac{1}{q!}\sum_{k=0}%
^{q}\left(  -1\right)  ^{k}%
\genfrac{[}{]}{0pt}{}{q+n+1}{k+n+1}%
_{n+1}S_{p+k}\left(  n\right)  ,\label{16}\\
S_{q}\left(  n\right)   &  =\sum_{k=0}^{q}\left(  -1\right)  ^{k}%
\genfrac{\{}{\}}{0pt}{}{q+n+1}{k+n+1}%
_{n+1}\binom{k+n}{k+1}k!,\label{17}\\
S_{q-1}\left(  n\right)   &  =\sum_{k=0}^{q}\left(  -1\right)  ^{k}%
\genfrac{\{}{\}}{0pt}{}{q+n+1}{k+n+1}%
_{n+1}k!h_{n}^{\left(  k+1\right)  }. \label{18}%
\end{align}

\end{theorem}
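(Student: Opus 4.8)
The plan is to prove (\ref{16}) directly from the rising-factorial expansion of the $r$-Stirling numbers of the first kind, and then to obtain (\ref{17}) and (\ref{18}) from (\ref{16}) by applying the $r$-Stirling transform and specializing one free parameter.

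To prove (\ref{16}), I would start from the generating polynomial $\left(x+r\right)^{\overline{q}}=\sum_{k=0}^{q}\genfrac{[}{]}{0pt}{}{q+r}{k+r}_{r}x^{k}$ stated in the introduction, and put $r=n+1$, $x=-m$ to obtain, for each integer $m$,
\[
\sum_{k=0}^{q}\left(-1\right)^{k}\genfrac{[}{]}{0pt}{}{q+n+1}{k+n+1}_{n+1}m^{k}=\left(n+1-m\right)^{\overline{q}}.
\]
Writing $S_{p+k}\left(n\right)=\sum_{m=1}^{n}m^{p+k}$ on the right-hand side of (\ref{16}) and interchanging the order of summation then collapses it to $\frac{1}{q!}\sum_{m=1}^{n}m^{p}\left(n+1-m\right)^{\overline{q}}$. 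Since $\left(n+1-m\right)^{\overline{q}}/q!=\binom{n+q-m}{q}$, this is precisely $\sum_{m=1}^{n}\binom{n+q-m}{q}m^{p}=S_{p}^{\left(q\right)}\left(n\right)$, the representation recorded in Section 4. I would emphasize that this computation nowhere uses $p\geq0$: it stays valid for every integer $p$ once $S_{p}^{\left(q\right)}\left(n\right)$ is read through the correspondence $H_{n}^{\left(-p,q+1\right)}=S_{p}^{\left(q\right)}\left(n\right)$, a point that will be essential below.

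Next I would invert (\ref{16}). Fixing $n$ and treating $q$ as the transform index, I rewrite (\ref{16}) as $q!\,S_{p}^{\left(q\right)}\left(n\right)=\sum_{k=0}^{q}\genfrac{[}{]}{0pt}{}{q+n+1}{k+n+1}_{n+1}\left[\left(-1\right)^{k}S_{p+k}\left(n\right)\right]$, which has the form $b_{q}=\sum_{k=0}^{q}\genfrac{[}{]}{0pt}{}{q+r}{k+r}_{r}a_{k}$ with $r=n+1$, $b_{q}=q!\,S_{p}^{\left(q\right)}\left(n\right)$ and $a_{k}=\left(-1\right)^{k}S_{p+k}\left(n\right)$. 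The $r$-Stirling transform of Section 2 then gives $a_{q}=\sum_{k=0}^{q}\left(-1\right)^{q-k}\genfrac{\{}{\}}{0pt}{}{q+n+1}{k+n+1}_{n+1}b_{k}$, and after cancelling the global factor $\left(-1\right)^{q}$ this becomes the master identity
\[
S_{p+q}\left(n\right)=\sum_{k=0}^{q}\left(-1\right)^{k}\genfrac{\{}{\}}{0pt}{}{q+n+1}{k+n+1}_{n+1}k!\,S_{p}^{\left(k\right)}\left(n\right).
\]

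Finally I would specialize this master identity. Taking $p=0$ yields (\ref{17}) after computing $S_{0}^{\left(k\right)}\left(n\right)=\sum_{m=1}^{n}\binom{n+k-m}{k}=\binom{k+n}{k+1}$ by the hockey-stick identity. Taking $p=-1$ yields (\ref{18}) after noting, from $H_{n}^{\left(-p,q+1\right)}=S_{p}^{\left(q\right)}\left(n\right)$ with $-p=1$, that $S_{-1}^{\left(k\right)}\left(n\right)=H_{n}^{\left(1,k+1\right)}=h_{n}^{\left(k+1\right)}$. I expect the only delicate point to be the sign bookkeeping in the inverse transform together with the legitimacy of the $p=-1$ instance; this is exactly why I would prove (\ref{16}) in the $p$-independent form above rather than only for non-negative $p$, so that both the master identity and its $p=-1$ specialization are genuinely available.
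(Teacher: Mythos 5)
Your proof is correct, but it takes a genuinely different route from the paper's. For (\ref{16}) the paper works with the generating function $\sum_{k}S_{p}^{(k)}(n)z^{k}=(1-z)^{-(n+1)}\sum_{j=1}^{n}j^{p}(1-z)^{j}$ under the substitution $z\rightarrow1-e^{-t}$, reads off coefficients via (\ref{egfrS}) to get the second-kind identity $S_{p+q}(n)=\sum_{k=0}^{q}(-1)^{k}\genfrac{\{}{\}}{0pt}{}{q+n+1}{k+n+1}_{n+1}k!\,S_{p}^{(k)}(n)$, and then inverts by the $r$-Stirling transform; you instead prove (\ref{16}) directly by evaluating $(x+n+1)^{\overline{q}}=\sum_{k}\genfrac{[}{]}{0pt}{}{q+n+1}{k+n+1}_{n+1}x^{k}$ at $x=-m$ and summing against $m^{p}$, which is more elementary (no generating functions) and makes the validity for negative $p$ transparent. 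The larger divergence is downstream: the paper proves (\ref{18}) by a separate, parallel generating-function argument for $\sum_{k}h_{n}^{(k+1)}z^{k}$, and proves (\ref{17}) by yet another computation combining $\genfrac{\{}{\}}{0pt}{}{q+n}{k+n}_{n}=\sum_{j}\binom{q}{j}\genfrac{\{}{\}}{0pt}{}{j}{k}n^{q-j}$ with (\ref{10}); you obtain both as the specializations $p=0$ and $p=-1$ of the single inverted master identity, using $S_{0}^{(k)}(n)=\binom{k+n}{k+1}$ (hockey-stick) and $S_{-1}^{(k)}(n)=H_{n}^{(1,k+1)}=h_{n}^{(k+1)}$ via (\ref{HSq}). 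Your treatment is the more unified one, exposing (\ref{17}) and (\ref{18}) as two instances of one formula, whereas the paper's approach keeps the three statements on independent footings; the one point you rightly flag, the legitimacy of $p=-1$, is indeed covered by your $p$-independent proof of (\ref{16}), since all sums involved are finite and the representation $S_{p}^{(q)}(n)=\sum_{m=1}^{n}\binom{n+q-m}{q}m^{p}$ is exactly the defining formula for $H_{n}^{(-p,q+1)}$ recorded in Section 4.
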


\begin{proof}
For (\ref{16}), we appeal to the generating function of $S_{p}^{\left(
k\right)  }\left(  n\right)  $ depending on the index $k$
\[
\sum_{k=0}^{\infty}S_{p}^{\left(  k\right)  }\left(  n\right)  z^{k}=\frac
{1}{\left(  1-z\right)  ^{n+1}}\sum_{j=1}^{n}j^{p}\left(  1-z\right)  ^{j}
\]
(cf. \cite{Laissaoui-Bounebirat-Rahmani2017}). We set $z\rightarrow1-e^{-t}$
and see that
\[
\sum_{k=0}^{\infty}\left(  -1\right)  ^{k}k!S_{p}^{\left(  k\right)  }\left(
n\right)  \frac{\left(  e^{-t}-1\right)  ^{k}}{k!}e^{-\left(  n+1\right)
t}=\sum_{j=1}^{n}j^{p}e^{-jt}.
\]
Hence, the proof follows from (\ref{egfrS}) and $r$-Stirling transform.

(\ref{18}) follows similarly by using the generating function
\[
\sum_{k=0}^{\infty}h_{n}^{\left(  k+1\right)  }z^{k}=\frac{1}{\left(
1-z\right)  ^{n+1}}\sum_{j=1}^{n}\frac{\left(  1-z\right)  ^{j}}{j}
\]
(see \cite{KC}).

Utilizing the identity
\[%
\genfrac{\{}{\}}{0pt}{}{q+n}{k+n}%
_{n}=\sum_{j=k}^{q}\binom{q}{j}%
\genfrac{\{}{\}}{0pt}{}{j}{k}%
n^{q-j}%
\]
(cf. \cite{Broder1984}) and (\ref{10}) we find that
\begin{align*}
&  \sum_{k=0}^{q}\left(  -1\right)  ^{k}k!%
\genfrac{\{}{\}}{0pt}{}{q+n+1}{k+n+1}%
_{n+1}\binom{k+n}{k+1}\\
&  =\sum_{j=0}^{q}\left(  -1\right)  ^{j}\binom{q}{j}\left(  n+1\right)
^{q-j}S_{j}\left(  q\right) \\
&  =\left(  n+1\right)  ^{q}\sum_{m=0}^{q}\left(  \sum_{j=0}^{q}\left(
-1\right)  ^{j}\binom{q}{j}m^{j}\left(  n+1\right)  ^{-j}\right)
=S_{q}\left(  n\right)  ,
\end{align*}
which is (\ref{17}).
\end{proof}

Equations (\ref{8}) and (\ref{16}) yield the following interesting identity.

\begin{proposition}%
\[
\sum_{j=1}^{q}\frac{\left(  -1\right)  ^{j}}{j}\binom{n}{j}\binom{n+q-j}%
{n}=\binom{n+q}{q}\left(  H_{n+q}-H_{q}-H_{n}\right)  .
\]

\end{proposition}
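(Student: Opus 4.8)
The plan is to prove the identity by a generating-function computation in an auxiliary variable $x$, holding $n$ fixed and letting $q$ range. First I would read the left-hand side as a Cauchy product: with $a_j=\frac{(-1)^j}{j}\binom{n}{j}$ and $b_i=\binom{n+i}{n}$, the inner sum is $\sum_{j=1}^q a_j b_{q-j}$, i.e. the coefficient of $x^q$ in $A(x)B(x)$, where $A(x)=\sum_{j\ge1}a_jx^j$ and $B(x)=\sum_{i\ge0}\binom{n+i}{n}x^i=(1-x)^{-(n+1)}$. The upper cut-off $j\le q$ is automatic because $\binom{n}{j}=0$ for $j>n$. Thus
\[
\sum_{q=0}^{\infty}\Bigg(\sum_{j=1}^{q}\frac{(-1)^j}{j}\binom{n}{j}\binom{n+q-j}{n}\Bigg)x^q=\frac{A(x)}{(1-x)^{n+1}}.
\]

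Next I would evaluate $A(x)$ in closed form. Differentiating term-by-term, $A'(x)=\sum_{j\ge1}(-1)^j\binom{n}{j}x^{j-1}=\frac{(1-x)^n-1}{x}$, and $A(0)=0$. The same two facts hold for $\sum_{j=1}^{n}\frac{(1-x)^j-1}{j}$, whose derivative is $-\sum_{j=1}^{n}(1-x)^{j-1}=\frac{(1-x)^n-1}{x}$ and which vanishes at $x=0$; hence $A(x)=\sum_{j=1}^{n}\frac{(1-x)^j-1}{j}$, so the left-hand generating function is $\frac{1}{(1-x)^{n+1}}\sum_{j=1}^{n}\frac{(1-x)^j-1}{j}$.

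For the right-hand side I would first pass through hyperharmonic numbers. By (\ref{4}), $\binom{n+q}{q}\big(H_{n+q}-H_q\big)=h_n^{(q+1)}$, so the summand equals $h_n^{(q+1)}-\binom{n+q}{q}H_n$. Using the superscript generating function $\sum_{k\ge0}h_n^{(k+1)}z^k=\frac{1}{(1-z)^{n+1}}\sum_{j=1}^{n}\frac{(1-z)^j}{j}$ (the one recorded in the proof of (\ref{18})), together with $\sum_{q\ge0}\binom{n+q}{q}x^q=(1-x)^{-(n+1)}$ and $H_n=\sum_{j=1}^{n}\frac1j$, the right-hand side has generating function
\[
\frac{1}{(1-x)^{n+1}}\Bigg(\sum_{j=1}^{n}\frac{(1-x)^j}{j}-\sum_{j=1}^{n}\frac{1}{j}\Bigg)=\frac{1}{(1-x)^{n+1}}\sum_{j=1}^{n}\frac{(1-x)^j-1}{j}.
\]
This is identical to the left-hand generating function, so equating coefficients of $x^q$ yields the claim for every $q\ge0$ (the case $q=0$ giving the trivial $0=0$).

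The only genuinely computational step is the closed evaluation of $A(x)$, which I expect to be the main obstacle; the derivative-plus-initial-value argument above sidesteps the double summation one would otherwise face. An alternative, matching the remark that the identity follows from (\ref{8}) and (\ref{16}), is to equate those two evaluations of $S_p^{(q)}(n)$ and locate the harmonic numbers inside the first-kind $r$-Stirling numbers of (\ref{16}): by item (3) of Lemma \ref{conlemma1} and (\ref{4}) one has $\genfrac{[}{]}{0pt}{}{q+n+1}{n+2}_{n+1}=q!\,h_q^{(n+1)}=q!\binom{q+n}{n}\big(H_{q+n}-H_n\big)$. That route reaches the same place but with heavier bookkeeping than the direct generating-function computation.
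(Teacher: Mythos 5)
Your proof is correct, but it follows a genuinely different route from the paper's. The paper obtains the identity as a byproduct of its hyper-sum machinery: it sets $p=-1$ in (\ref{16}) so that the left side becomes $q!\,h_n^{(q+1)}$, isolates the $k=0$ term $q!\binom{n+q}{q}H_n$ via Lemma \ref{conlemma1}(3), substitutes (\ref{8}) with $q=0$ for $S_{k-1}(n)$, collapses the resulting double sum with the relation $\sum_{k=j}^{q}(-1)^{k-j}\genfrac{[}{]}{0pt}{}{q+r}{k+r}_{r}\genfrac{\{}{\}}{0pt}{}{k+s}{j+s}_{s}=\binom{q}{j}(r-s)^{\overline{q-j}}$, and finishes with (\ref{4}) --- precisely the ``heavier bookkeeping'' alternative you sketch in your last paragraph. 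Your argument instead fixes $n$, generates over $q$, and needs only the binomial theorem, a geometric series, and the recorded generating function $\sum_{k\ge0}h_n^{(k+1)}z^k=\frac{1}{(1-z)^{n+1}}\sum_{j=1}^{n}\frac{(1-z)^{j}}{j}$; the derivative-plus-initial-value evaluation $A(x)=\sum_{j=1}^{n}\frac{(1-x)^{j}-1}{j}$ is correct, and both sides indeed have generating function $(1-x)^{-(n+1)}\sum_{j=1}^{n}\frac{(1-x)^{j}-1}{j}$. What each buys: yours is more elementary and self-contained; the paper's exhibits the identity as a direct consequence of its two evaluations of $S_p^{(q)}(n)$, which is the point of stating it where it does. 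One small quibble: the cut-off $j\le q$ in the Cauchy product is automatic because $b_{q-j}$ requires $q-j\ge0$, not because $\binom{n}{j}=0$ for $j>n$ (the latter only ensures $A$ is a polynomial); this does not affect the argument.
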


\begin{proof}
Taking $p=-1$ in (\ref{16}) gives
\[
q!h_{n}^{\left(  q+1\right)  }=\sum_{k=1}^{q}\left(  -1\right)  ^{k}%
\genfrac{[}{]}{0pt}{}{q+n+1}{k+n+1}%
_{n+1}S_{k-1}\left(  n\right)  +q!\binom{n+q}{q}H_{n},
\]
by Lemma \ref{conlemma1} (3). We now make use of (\ref{8}) with $q=0$ and
deduce that%
\begin{align*}
&  \sum_{k=1}^{q}\left(  -1\right)  ^{k}%
\genfrac{[}{]}{0pt}{}{q+n+1}{k+n+1}%
_{n+1}S_{k-1}\left(  n\right) \\
&  =\sum_{j=1}^{q}\left(  j-1\right)  !\binom{n}{j}\sum_{k=j}^{q}\left(
-1\right)  ^{k}%
\genfrac{[}{]}{0pt}{}{q+n+1}{k+n+1}%
_{n+1}%
\genfrac{\{}{\}}{0pt}{}{k}{j}%
.
\end{align*}
Here we appeal to the equation
\[
\sum_{k=j}^{q}\left(  -1\right)  ^{k-j}%
\genfrac{[}{]}{0pt}{}{q+r}{k+r}%
_{r}%
\genfrac{\{}{\}}{0pt}{}{k+s}{j+s}%
_{s}=\binom{q}{j}\left(  r-s\right)  ^{\overline{q-j}}%
\]
(see \cite{Broder1984,NyRa}). Hence,%
\begin{align*}
\sum_{k=1}^{q}\left(  -1\right)  ^{k}%
\genfrac{[}{]}{0pt}{}{q+n+1}{k+n+1}%
_{n+1}S_{k-1}\left(  n\right)   &  =\sum_{j=1}^{q}\left(  -1\right)
^{j}\left(  j-1\right)  !\binom{n}{j}\binom{q}{j}\left(  n+1\right)
^{\overline{q-j}}\\
&  =q!\sum_{j=1}^{q}\frac{\left(  -1\right)  ^{j}}{j}\binom{n}{j}\binom
{n+q-j}{n}.
\end{align*}
Thus, the proof follows from (\ref{4}).
\end{proof}
\section{Congruences}
In this section we present several congruences for the hyperharmonic numbers,
the generalized hyperharmonic numbers, and the hyper-sums. The motivation
rises from the formulas established in Sections 3 and 4, where hyperharmonic
numbers and hyper-sums are formulated in terms of the Stirling numbers.

Throughout by a congruence $x\equiv y\pmod m$, we mean that the rational
numbers $x=\frac{a}{b}$ and $y=\frac{c}{d}$ satisfy $x\equiv y\pmod m$ if and
only if $m|\left(  ad-bc\right)  $. We also call a rational number as an
$m$-integer whenever its denominator is not divisible by $m$.

We start by reviewing some basic facts about Bernoulli numbers and polynomials.

\begin{lemma}
\label{conlemma2}We have

\noindent(1) (von Staudt-Clausen) For $n=1$ and for any even integer $n\geq2$,
we have%
\[
B_{n}=A_{n}-\sum_{\left(  p-1\right)  |n}\frac{1}{p},
\]
where $A_{n}$ is an integer and $p$ is a prime number. Equivalently, we have%
\[
pB_{n}\equiv\left\{
\begin{array}
[c]{ll}%
-1\pmod p, & \text{if }p-1\text{ divides }n,\\
0\pmod p, & \text{if }p-1\text{ does not divide }n.
\end{array}
\right.
\]
(2) If $q$ is an integer, then $\frac{B_{n}\left(  q\right)  -B_{n}}{n}$ is an
integer (\cite[p. 6]{Rademacher1973}).
\end{lemma}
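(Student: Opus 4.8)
The plan is to handle the two parts by different means, since part (2) is essentially formal while part (1) carries all the arithmetic content. I would dispose of part (2) first. The key is the difference equation $B_{n}(x+1)-B_{n}(x)=nx^{n-1}$, which I would extract from the generating function $te^{xt}/(e^{t}-1)$ by multiplying through by $e^{t}-1$ and reading off coefficients, since the product becomes $te^{xt}$. For a positive integer $q$ this telescopes:
\[
B_{n}(q)-B_{n}(0)=\sum_{j=0}^{q-1}\bigl(B_{n}(j+1)-B_{n}(j)\bigr)=n\sum_{j=0}^{q-1}j^{n-1},
\]
so $\frac{B_{n}(q)-B_{n}}{n}=\sum_{j=0}^{q-1}j^{n-1}$ is a sum of integers, hence an integer. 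For $q\le 0$ the same identity run in the reverse direction (or the reflection $B_{n}(1-x)=(-1)^{n}B_{n}(x)$) covers the remaining cases, so the claim holds for every integer $q$.

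For part (1) I would start from the Stirling expansion
\[
B_{n}=\sum_{k=0}^{n}\frac{(-1)^{k}k!}{k+1}\genfrac{\{}{\}}{0pt}{}{n}{k},
\]
which I derive by writing $\frac{t}{e^{t}-1}=\frac{\ln\left(1+(e^{t}-1)\right)}{e^{t}-1}$, expanding the logarithm as a power series in $e^{t}-1$, and applying (\ref{egfrS}) with $r=0$. The decisive structural remark is a denominator count: a prime $p$ can divide the denominator of the summand $\frac{(-1)^{k}k!}{k+1}\genfrac{\{}{\}}{0pt}{}{n}{k}$ only if $p\mid(k+1)$ while $p\nmid k!$, and this forces $k+1=p$ to be prime. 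Thus the fractional part of $B_{n}$ modulo $p$ is controlled entirely by the single term with $k=p-1$.

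The arithmetic core is then to evaluate that $k=p-1$ term modulo $p$. Using the explicit identity $k!\,\genfrac{\{}{\}}{0pt}{}{n}{k}=\sum_{j=0}^{k}(-1)^{k-j}\binom{k}{j}j^{n}$ together with $\binom{p-1}{j}\equiv(-1)^{j}\pmod p$ and Wilson's theorem $(p-1)!\equiv-1\pmod p$, the term reduces modulo $p$ to the power sum $\sum_{j=1}^{p-1}j^{n}$. Evaluating the latter by running over a primitive root gives $\sum_{j=1}^{p-1}j^{n}\equiv-1\pmod p$ when $(p-1)\mid n$ and $\equiv 0\pmod p$ otherwise; this shows that the $k=p-1$ term contributes exactly $-\frac{1}{p}$ to $B_{n}$ precisely when $(p-1)\mid n$ and is a $p$-integer otherwise, which is the von Staudt--Clausen statement $B_{n}=A_{n}-\sum_{(p-1)\mid n}\frac{1}{p}$. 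The equivalent congruence form follows at once: multiplying $B_{n}=A_{n}-\sum_{(\ell-1)\mid n}\frac{1}{\ell}$ by a fixed prime $p$ makes every term on the right a $p$-integer except the single term $\ell=p$, present exactly when $(p-1)\mid n$, which contributes $\frac{p}{p}=1$; hence $pB_{n}\equiv-1\pmod p$ in that case and $pB_{n}\equiv 0\pmod p$ otherwise.

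The main obstacle is this last arithmetic step of part (1): correctly isolating the surviving denominator and pinning down its residue via Wilson's theorem and the power-sum congruence. The difference-equation argument for part (2), the derivation of the Stirling expansion, and the passage from the explicit formula to the congruence form are all routine once that congruence is established.
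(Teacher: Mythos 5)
The paper does not prove this lemma at all: both parts are quoted as classical facts (von Staudt--Clausen, and the integrality of $\frac{B_{n}(q)-B_{n}}{n}$ cited to Rademacher), so there is no in-paper argument to compare against. Your route is the standard textbook proof via the expansion $B_{n}=\sum_{k=0}^{n}\frac{(-1)^{k}k!}{k+1}\genfrac{\{}{\}}{0pt}{}{n}{k}$, and your part (2), the derivation of that expansion, the reduction of the $k=p-1$ term via $\binom{p-1}{j}\equiv(-1)^{j}\pmod p$, and the power-sum congruence are all correct.

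There is, however, one genuine gap in the denominator count for part (1). Your criterion --- that a prime $p$ can enter the denominator of $\frac{(-1)^{k}k!}{k+1}\genfrac{\{}{\}}{0pt}{}{n}{k}$ only if $p\mid(k+1)$ and $p\nmid k!$, forcing $k+1=p$ --- is not the right divisibility test (what matters is whether $v_{p}(k+1)>v_{p}\bigl(k!\,\genfrac{\{}{\}}{0pt}{}{n}{k}\bigr)$), and it fails exactly once: for $k=3$ and $p=2$ we have $2\mid 3!$, yet $\frac{3!}{4}=\frac{3}{2}$ is not a $2$-integer, so the term $k+1=4$ is \emph{not} disposed of by your argument even though $4$ is composite. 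As written, your proof would allow a spurious contribution of $\frac{1}{2}$ from the $k=3$ term. The classical repair is to treat $k+1=4$ separately: since $6\genfrac{\{}{\}}{0pt}{}{n}{3}=3^{n}-3\cdot 2^{n}+3\equiv 0\pmod 4$ for even $n\geq 2$, the Stirling number $\genfrac{\{}{\}}{0pt}{}{n}{3}$ is even for the only values of $n$ at issue (the sum stops at $k=1$ when $n=1$), so $-\frac{3}{2}\genfrac{\{}{\}}{0pt}{}{n}{3}$ is an integer after all. With that one case patched, the rest of your argument --- and the passage to the congruence $pB_{n}\equiv -1$ or $0\pmod p$ --- goes through. (Minor point: Wilson's theorem is not actually needed in your reduction of the $k=p-1$ term, since the factor $(p-1)!$ is already absorbed into $k!\,\genfrac{\{}{\}}{0pt}{}{n}{k}=\sum_{j=0}^{k}(-1)^{k-j}\binom{k}{j}j^{n}$.)
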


The following result is about a divisibility property for generalized
hyperharmonic numbers.

\begin{theorem}
\label{contheorem1}Let $n$ be an odd prime. For positive integers $p$ and $q$,
we have%
\[
n^{p}H_{n+1}^{\left(  p,q\right)  }\equiv q\pmod n\text{.}%
\]

\end{theorem}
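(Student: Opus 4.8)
The plan is to avoid the Stirling-number machinery entirely and argue directly from the explicit binomial representation recorded in Section 4, namely $H_{n}^{(p,q+1)}=\sum_{k=1}^{n}\binom{n+q-k}{q}\tfrac{1}{k^{p}}$. Re-indexing it (replace $q$ by $q-1$ and $n$ by $n+1$) turns this into
\[
H_{n+1}^{(p,q)}=\sum_{k=1}^{n+1}\binom{n+q-k}{q-1}\frac{1}{k^{p}},
\]
which is valid for $q\ge 1$ since then every upper index $n+q-k$ (minimized at $k=n+1$, giving $q-1\ge 0$) and the lower index $q-1$ are non-negative integers. Multiplying by $n^{p}$ yields the single clean expression
\[
n^{p}H_{n+1}^{(p,q)}=\sum_{k=1}^{n+1}\binom{n+q-k}{q-1}\frac{n^{p}}{k^{p}},
\]
and the whole proof reduces to reading off this sum modulo $n$.

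The key observation is that $k=n$ is the unique index in $\{1,\dots,n+1\}$ divisible by the prime $n$ (a second multiple $2n$ would require $2n\le n+1$). For every other $k$ the denominator $k^{p}$ is coprime to $n$, so $\binom{n+q-k}{q-1}\,n^{p}/k^{p}$ is an $n$-integer whose numerator is divisible by $n^{p}$ (here I use $p\ge 1$); by the congruence convention of this section it is therefore $\equiv 0\pmod{n}$. The surviving term $k=n$ equals $\binom{n+q-n}{q-1}\cdot n^{p}/n^{p}=\binom{q}{q-1}=q$, so summing gives $n^{p}H_{n+1}^{(p,q)}\equiv q\pmod{n}$, as desired.

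I expect no substantive obstacle: the only care needed is the rational-congruence bookkeeping, i.e. checking that each discarded term genuinely satisfies the definition of $x\equiv 0\pmod n$ (its numerator, an integer multiple of $n^{p}$, is divisible by $n$, while its denominator is prime to $n$), and that the binomial coefficients remain honest non-negative binomials throughout the chosen range of $k$. Oddness of $n$ is not actually used beyond primality; primality enters only to guarantee that $k=n$ is the sole multiple of $n$ among $1,\dots,n+1$ and that every other $k$ is invertible modulo $n$.
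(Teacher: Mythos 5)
Your proof is correct, and it takes a genuinely different --- and considerably more elementary --- route than the paper's. The paper establishes this congruence as an application of its main identity (\ref{gh-pB}): setting $r=0$ there, isolating the terms $k=0,1,n-1,n$, and controlling each via the expansion of $B_{k}^{(p)}(q)$ in poly-Bernoulli numbers, the $n$-integrality of $B_{m}^{(p)}$ from (\ref{pBs}), the Arakawa--Kaneko congruence for $n^{p-1}B_{n}^{(p)}$, the divisibility $\genfrac{[}{]}{0pt}{}{n}{k}\equiv 0\pmod n$ for $2\le k\le n-1$, von Staudt--Clausen, and Wilson's theorem, with a separate argument for $p=1$ through ordinary Bernoulli polynomials. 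You instead read the congruence directly off the closed binomial form $H_{n+1}^{(p,q)}=\sum_{k=1}^{n+1}\binom{n+q-k}{q-1}k^{-p}$ (a legitimate re-indexing, valid for $q\ge 1$, of the formula the paper quotes from \cite{DiMeCe}): after multiplying by $n^{p}$, every term with $k\ne n$ is an $n$-integer whose numerator is divisible by $n$, while the unique term with $n\mid k$ in the range $1\le k\le n+1$, namely $k=n$, contributes exactly $\binom{q}{q-1}=q$. The rational-congruence bookkeeping is sound (a finite sum of $n$-integers each $\equiv 0$ is $\equiv 0$ under the section's convention), so there is no gap. What the paper's route buys is a demonstration that the poly-Bernoulli identity of Theorem \ref{teo3} has arithmetic consequences, which is the declared motivation of Section 5; what yours buys is brevity, the elimination of the case split at $p=1$, and, as you correctly observe, the removal of the oddness hypothesis: your argument works verbatim for $n=2$, whereas the paper's proof relies on parity-sensitive facts such as $B_{n}=0$ for odd $n>1$ and the factor $\frac{n-1}{2}$.
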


\begin{proof}
Let $n$ be an odd prime and $p\geq2$. Setting $r=0$ in (\ref{gh-pB}), multiplying both sides by $n^{p-1}$ and separating out the terms with
$k=0,k=1,k=n-1$, and $k=n$, we have%
\begin{align*}
n^{p-1}n!H_{n+1}^{\left(  p,q\right)  }  &  =n^{p-1}%
\genfrac{[}{]}{0pt}{}{n}{0}%
B_{0}^{\left(  p\right)  }\left(  q\right)  +n^{p-1}%
\genfrac{[}{]}{0pt}{}{n}{1}%
B_{1}^{\left(  p\right)  }\left(  q\right) \\
&  +n^{p-1}%
\genfrac{[}{]}{0pt}{}{n}{n-1}%
B_{n-1}^{\left(  p\right)  }\left(  q\right)  +n^{p-1}%
\genfrac{[}{]}{0pt}{}{n}{n}%
B_{n}^{\left(  p\right)  }\left(  q\right)  +\sum_{k=2}^{n-2}%
\genfrac{[}{]}{0pt}{}{n}{k}%
B_{k}^{\left(  p\right)  }\left(  q\right)  .
\end{align*}
We consider each of these terms separately.

By Lemma \ref{conlemma1} (1), we have $n^{p-1}%
\genfrac{[}{]}{0pt}{}{n}{0}%
B_{0}^{\left(  p\right)  }\left(  q\right)  =0$.

By Lemma \ref{conlemma1} (1) and (\ref{3}), we have%
\[
n^{p-1}%
\genfrac{[}{]}{0pt}{}{n}{1}%
B_{1}^{\left(  p\right)  }\left(  q\right)  =n^{p-1}\left(  n-1\right)
!\left(  q+\frac{1}{2^{p}}\right)  .
\]

We write%
\[
B_{n-1}^{\left(  p\right)  }\left(  q\right)  =B_{n-1}^{\left(  p\right)
}+\sum_{m=0}^{n-2}\binom{n-1}{m}B_{m}^{\left(  p\right)  }q^{n-1-m}.
\]
Now, from (\ref{pBs}) we observe that $B_{m}^{\left(  p\right)  }$ is an
$n$-integer for $m=0,1,\ldots,n-2$. On the other hand, $n^{p}B_{n-1}^{\left(
p\right)  }$ is an $n$-integer satisfying $n^{p}B_{n-1}^{\left(  p\right)
}\equiv-1\pmod n.$ Thus, by Lemma \ref{conlemma1} (1), we conclude that%
\begin{align*}
n^{p-1}%
\genfrac{[}{]}{0pt}{}{n}{n-1}%
B_{n-1}^{\left(  p\right)  }\left(  q\right)   &  =n^{p}B_{n-1}^{\left(
p\right)  }\frac{n-1}{2}+n^{p}\frac{n-1}{2}\sum_{m=0}^{n-2}\binom{n-1}{m}%
B_{m}^{\left(  p\right)  }q^{n-1-m}\\
&  \equiv-\frac{n-1}{2}\pmod n.
\end{align*}

Since%
\[
B_{n}^{\left(  p\right)  }\left(  q\right)  =B_{n}^{\left(  p\right)
}+nB_{n-1}^{\left(  p\right)  }q+\sum_{m=0}^{n-2}\binom{n}{m}B_{m}^{\left(
p\right)  }q^{n-m},
\]
we have%
\[
n^{p-1}%
\genfrac{[}{]}{0pt}{}{n}{n}%
B_{n}^{\left(  p\right)  }\left(  q\right)  =n^{p-1}B_{n}^{\left(  p\right)
}+n^{p}B_{n-1}^{\left(  p\right)  }q+n^{p-1}\sum_{m=0}^{n-2}\binom{n}{m}%
B_{m}^{\left(  p\right)  }q^{n-m}.
\]
Since $p\geq2$ and $B_{m}^{\left(  p\right)  }$ is an $n$-integer for
$m=0,1,\ldots,n-2$, we obtain that%
\[
n^{p-1}\sum_{m=0}^{n-2}\binom{n}{m}B_{m}^{\left(  p\right)  }q^{n-m}%
\equiv0\pmod n.
\]
We also have $n^{p}B_{n-1}^{\left(  p\right)  }q\equiv-q\pmod n$. Now, in
\cite[Theorem 1]{AK} it has been also proved that $n^{p-1}B_{n}^{\left(  p\right)  }$ is
an $n$-integer with%
\[
n^{p-1}B_{n}^{\left(  p\right)  }\equiv\frac{1}{n}%
\genfrac{\{}{\}}{0pt}{}{n}{n-1}%
-\frac{n}{2^{p}}\pmod n.
\]
Since $%
\genfrac{\{}{\}}{0pt}{}{n}{n-1}%
=\frac{n\left(  n-1\right)  }{2}$, we then have $n^{p-1}B_{n}^{\left(
p\right)  }\equiv\frac{n-1}{2}\pmod n$. Thus, we arrive at%
\[
n^{p-1}%
\genfrac{[}{]}{0pt}{}{n}{n}%
B_{n}^{\left(  p\right)  }\left(  q\right)  \equiv\frac{n-1}{2}-q\pmod n.
\]

Finally we write%
\[
n^{p-1}\sum_{k=2}^{n-2}%
\genfrac{[}{]}{0pt}{}{n}{k}%
B_{k}^{\left(  p\right)  }\left(  q\right)  =n^{p-1}\sum_{k=2}^{n-2}%
\genfrac{[}{]}{0pt}{}{n}{k}%
\sum_{m=0}^{k}\binom{k}{m}B_{m}^{\left(  p\right)  }q^{k-m}.
\]
By Lemma \ref{conlemma1} (4) and the fact that $B_{m}^{\left(  p\right)  }$ is
an $n$-integer for $m=0,1,\ldots,k$, where $k=2,3,\ldots,n-2$, we find that%
\[
n^{p-1}\sum_{k=2}^{n-2}%
\genfrac{[}{]}{0pt}{}{n}{k}%
B_{k}^{\left(  p\right)  }\left(  q\right)  \equiv0\pmod n.
\]

Combining these results we conclude that%
\begin{align*}
n^{p-1}n!H_{n+1}^{\left(  p,q\right)  }  &  \equiv n^{p-1}\left(  n-1\right)
!\left(  q+\frac{1}{2^{p}}\right)  -\frac{n-1}{2}+\frac{n-1}{2}-q\\
&  \equiv-q\pmod n.
\end{align*}
The result follows from Wilson's theorem.

Now for an odd prime $n$, let $p=1$ and $r=0$ in (\ref{gh-pB}). Since
$B_{k}^{\left(  1\right)  }\left(  q\right)  =B_{k}\left(  q+1\right)  $, with
$B_{1}\left(  q+1\right)  =q+\frac{1}{2}$, we have
\begin{align*}
n!H_{n+1}^{\left(  1,q\right)  }
&  =%
\genfrac{[}{]}{0pt}{}{n}{0}%
B_{0}\left(  q+1\right)  +%
\genfrac{[}{]}{0pt}{}{n}{1}%
B_{1}\left(  q+1\right)  +%
\genfrac{[}{]}{0pt}{}{n}{n-1}%
B_{n-1}\left(  q+1\right) \\
&  \quad+%
\genfrac{[}{]}{0pt}{}{n}{n}%
B_{n}\left(  q+1\right)  +\sum_{k=2}^{n-2}%
\genfrac{[}{]}{0pt}{}{n}{k}%
B_{k}\left(  q+1\right) \\
&  =\left(  n-1\right)  !\left(  q+\frac{1}{2}\right)  +\frac{n\left(
n-1\right)  ^{2}}{2}\frac{B_{n-1}\left(  q+1\right)  -B_{n-1}}{n-1}+\frac
{n-1}{2}nB_{n-1}\\
&  \quad+n\frac{B_{n}\left(  q+1\right)  -B_{n}}{n}+B_{n}+\sum_{k=2}^{n-2}k%
\genfrac{[}{]}{0pt}{}{n}{k}%
\frac{B_{k}\left(  q+1\right)  -B_{k}}{k}+\sum_{k=2}^{n-2}%
\genfrac{[}{]}{0pt}{}{n}{k}%
B_{k}.
\end{align*}
By Lemma \ref{conlemma2} (2),%
\[
\frac{n\left(  n-1\right)  ^{2}}{2}\frac{B_{n-1}\left(  q+1\right)  -B_{n-1}%
}{n-1}\equiv n\frac{B_{n}\left(  q+1\right)  -B_{n}}{n}\equiv0\pmod n,
\]
and $B_{n}=0$ since $n$ is an odd prime. On the other hand, by Lemma
\ref{conlemma2} (1), $nB_{n-1}\equiv-1\pmod n,$ and $B_{k}$ is an $n$-integer
for $k=2,3,\ldots,n-2$. Then, by Lemma \ref{conlemma1} (4), we have%
\[
n!H_{n+1}^{\left(  1,q\right)  }\equiv\left(  n-1\right)  !\left(  q+\frac
{1}{2}\right)  -\frac{n-1}{2}\equiv\left(  n-1\right)  !\left(  q+\frac{1}%
{2}+\frac{n-1}{2}\right)  \pmod n
\]
by Wilson's theorem. Canceling $\left(  n-1\right)  !$ gives the desired result.
\end{proof}

Next we present a symmetric-type congruence for hyperharmonic numbers.

\begin{theorem}
\label{contheorem2}For a prime $q$ and an integer $n$ with $1\leq n\leq q-1$,
we have%
\[
q\left(  h_{n}^{\left(  q+1\right)  }+nh_{q}^{\left(  n+1\right)  }\right)
\equiv n\pmod q.
\]

\end{theorem}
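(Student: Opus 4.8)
The plan is to reduce the statement to the explicit formula (\ref{4}) relating hyperharmonic to ordinary harmonic numbers, and then to track $q$-adic denominators carefully. First I would apply (\ref{4}) with shifted parameters to obtain
\[
h_{n}^{(q+1)}=\binom{n+q}{q}\left(H_{n+q}-H_{q}\right),\qquad h_{q}^{(n+1)}=\binom{q+n}{n}\left(H_{q+n}-H_{n}\right).
\]
Since $\binom{n+q}{q}=\binom{n+q}{n}=:C$, combining these gives
\[
h_{n}^{(q+1)}+n\,h_{q}^{(n+1)}=C\left(H_{n+q}-H_{q}\right)+nC\left(H_{n+q}-H_{n}\right),
\]
so after multiplying by $q$ the whole problem reduces to understanding $C$, $q(H_{n+q}-H_q)$ and $q(H_{n+q}-H_n)$ modulo $q$.

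The next step is a denominator count exploiting the constraint $1\le n\le q-1$, which forces $n+q\le 2q-1$, so the only multiple of the prime $q$ in $\{1,\dots,n+q\}$ is $q$ itself. Consequently $H_{n+q}-H_{q}=\sum_{k=q+1}^{n+q}1/k$ is a $q$-integer (its range $\{q+1,\dots,n+q\}$ contains no multiple of $q$), whereas $H_{n+q}-H_{n}=\sum_{k=n+1}^{n+q}1/k$ runs over $q$ consecutive integers containing exactly the single $q$-divisible term $1/q$, so $H_{n+q}-H_{n}=\tfrac1q+\varepsilon$ with $\varepsilon$ a $q$-integer. Using the paper's convention for congruences of rationals, multiplying a $q$-integer by $q$ yields something $\equiv 0\pmod q$, so the first summand drops out after multiplying by $q$, while $q(H_{n+q}-H_{n})=1+q\varepsilon\equiv1\pmod q$.

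It remains to evaluate $C=\binom{q+n}{n}\bmod q$. I would use Lucas' theorem: writing $n+q=1\cdot q+n$ and $q=1\cdot q+0$ in base $q$ (legitimate since $0\le n\le q-1$) gives $\binom{n+q}{q}\equiv\binom{1}{1}\binom{n}{0}=1\pmod q$; alternatively $\binom{q+n}{n}=\prod_{i=1}^{n}\frac{q+i}{i}=\prod_{i=1}^{n}\left(1+q/i\right)\equiv1\pmod q$, each factor being a $q$-integer congruent to $1$. Assembling the three ingredients,
\[
q\left(h_{n}^{(q+1)}+n\,h_{q}^{(n+1)}\right)=C\cdot q(H_{n+q}-H_{q})+nC\cdot q(H_{n+q}-H_{n})\equiv 0+n\cdot1\cdot1\equiv n\pmod q,
\]
which is the claim. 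The step I expect to require the most care is the bookkeeping of the rational congruences: one must check against the stated definition that $q$ times a $q$-integer is genuinely $\equiv0\pmod q$ and that $q\bigl(\tfrac1q+\varepsilon\bigr)\equiv1$, since these are precisely where an off-by-one in the denominator count (for instance miscounting which multiples of $q$ fall in each summation range) would propagate into a wrong final residue.
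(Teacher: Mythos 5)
Your proof is correct, but it takes a genuinely different route from the paper's. The paper stays within its Stirling-number framework: it applies the $r$-Stirling transform to identity (\ref{18}) to write $q!h_{n}^{\left(q+1\right)}$ as an alternating sum of $r$-Stirling numbers of the first kind times $S_{k-1}\left(n\right)$, isolates the terms $k=0,1,q$ using Lemma \ref{conlemma1} (3) (which identifies the $k=1$ coefficient as $q!h_{q}^{\left(n+1\right)}$, producing the symmetric partner term), kills the middle range by Lemma \ref{conlemma1} (4), evaluates $S_{q-1}\left(n\right)\equiv n$ by Fermat's little theorem, and finishes with Wilson's theorem (treating $q=2$ separately because of the sign $\left(-1\right)^{q}$). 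You instead go through the closed form (\ref{4}), observe that both hyperharmonic numbers carry the same binomial coefficient $\binom{n+q}{n}$, and reduce everything to a $q$-adic denominator count: $H_{n+q}-H_{q}$ is a $q$-integer while $H_{n+q}-H_{n}$ has exactly one term $1/q$, and $\binom{n+q}{n}\equiv1\pmod q$ by Lucas. Your range checks are right ($n+q\le 2q-1$ guarantees the stated counts), and the bookkeeping of rational congruences is consistent with the paper's stated convention. Your argument is more elementary and self-contained --- it needs neither the hyper-sum identity (\ref{18}) nor Wilson's theorem, and it makes transparent why the particular combination $h_{n}^{\left(q+1\right)}+nh_{q}^{\left(n+1\right)}$ is the natural one (both terms share the tail $H_{n+q}$) and why no case split at $q=2$ is needed; the paper's derivation, by contrast, exhibits the congruence as a byproduct of the Stirling/hyper-sum machinery developed in Sections 3 and 4, which is the point of that section.
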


\begin{proof}
Let $1\leq n\leq q-1$ and $q$ be a prime. Applying the $r$-Stirling transform in (\ref{18}) we find that%
\begin{align*}
q!h_{n}^{\left(  q+1\right)  } &  =\sum_{k=0}^{q}\left(  -1\right)  ^{k}%
\genfrac{[}{]}{0pt}{}{q+n+1}{k+n+1}%
_{n+1}S_{k-1}\left(  n\right)  \\
&  =%
\genfrac{[}{]}{0pt}{}{q+n+1}{0+n+1}%
_{n+1}H_{n}-%
\genfrac{[}{]}{0pt}{}{q+n+1}{1+n+1}%
_{n+1}S_{0}\left(  n\right)  +\left(  -1\right)  ^{q}S_{q-1}\left(  n\right)
\\
&  +\sum_{k=2}^{q-1}\left(  -1\right)  ^{k}%
\genfrac{[}{]}{0pt}{}{q+n+1}{k+n+1}%
_{n+1}S_{k-1}\left(  n\right)  \\
&  \equiv\left(  n+1\right)  \cdots\left(  n+q\right)  H_{n}-nq!h_{q}^{\left(
n+1\right)  }+\left(  -1\right)  ^{q}S_{q-1}\left(  n\right)  \pmod q
\end{align*}
by Lemma \ref{conlemma1} (3) and (4). Now, $1\leq n\leq q-1$ implies that%
\[
\left(  n+1\right)  \left(  n+2\right)  \cdots\left(  n+q\right)  H_{n}%
\equiv0\pmod q
\]
and%
\[
S_{q-1}\left(  n\right)  =1^{q-1}+2^{q-1}+\cdots+n^{q-1}\equiv n\pmod q.
\]
Thus,%
\[
q!\left(  h_{n}^{\left(  q+1\right)  }+nh_{q}^{\left(  n+1\right)  }\right)
\equiv\left(  -1\right)  ^{q}n\pmod q.
\]
If $q=2$, then $n=1$, and we have%
\[
2\left(  h_{1}^{\left(  3\right)  }+h_{2}^{\left(  2\right)  }\right)
=2\left(  1+\frac{5}{2}\right)  \equiv1\pmod 2.
\]
Otherwise, $q!\left(  h_{n}^{\left(  q+1\right)  }+nh_{q}^{\left(  n+1\right)
}\right)  \equiv-n\pmod q$ and the result follows from Wilson's theorem.
\end{proof}

We conclude this section by stating two congruences about $S_{p}^{\left(  q\right)  }\left(  n\right)$. In
\cite{Laissaoui-Bounebirat-Rahmani2017} congruences for $S_{p}^{\left(
q\right)  }\left(  n\right)  $ when $n$ is a prime number were given. Here we
give results modulo a prime number $p$.

\begin{theorem}
\label{contheorem3}For a prime number $p$, we have%
\[
S_{p}^{\left(  q\right)  }\left(  n\right)  \equiv\binom{n+q+1}{q+2}\pmod p.
\]

\end{theorem}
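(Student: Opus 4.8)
The target is the congruence
\[
S_{p}^{\left(  q\right)  }\left(  n\right)  \equiv\binom{n+q+1}{q+2}\pmod p
\]
for a prime $p$. The natural starting point is the explicit formula from Theorem~\ref{teo4},
\[
S_{p}^{\left(  q\right)  }\left(  n\right)  =\sum_{j=0}^{p}j!
\genfrac{\{}{\}}{0pt}{}{p+1}{j+1}
\binom{n+q}{j+q+1},
\]
since this expresses the hyper-sum as a sum of Stirling-number terms, and we are reducing modulo the prime $p$ that appears as the power. The plan is to reduce the coefficients $j!\genfrac{\{}{\}}{0pt}{}{p+1}{j+1}$ modulo $p$, discard everything that vanishes, and show that the surviving terms telescope into the single binomial coefficient $\binom{n+q+1}{q+2}$.

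**Key reduction step.** The main engine will be Lemma~\ref{conlemma1}(4), which asserts that $\genfrac{\{}{\}}{0pt}{}{p}{k}\equiv 0\pmod p$ for $k=2,3,\dots,p-1$; I will need the analogous statement for $\genfrac{\{}{\}}{0pt}{}{p+1}{j+1}$, obtainable either directly from the same source or via the recurrence $\genfrac{\{}{\}}{0pt}{}{p+1}{j+1}=(j+1)\genfrac{\{}{\}}{0pt}{}{p}{j+1}+\genfrac{\{}{\}}{0pt}{}{p}{j}$ together with the known boundary values. First I would examine which indices $j$ in the sum survive modulo $p$. The terms with $j=0$ and $j=p$ are the obvious candidates (boundary Stirling numbers equal to $1$), and one must check that $\genfrac{\{}{\}}{0pt}{}{p+1}{j+1}$ is divisible by $p$ for all intermediate $j$, so that the $(p+1)$-term sum collapses to just its two ends modulo $p$. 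I expect the surviving contribution to be
\[
0!\genfrac{\{}{\}}{0pt}{}{p+1}{1}\binom{n+q}{q+1}+p!\genfrac{\{}{\}}{0pt}{}{p+1}{p+1}\binom{n+q}{p+q+1}\pmod p,
\]
and since $p!\equiv 0\pmod p$ the second term drops out, while $\genfrac{\{}{\}}{0pt}{}{p+1}{1}=1$ leaves $\binom{n+q}{q+1}$. This already signals the main obstacle below, because $\binom{n+q}{q+1}$ is not the claimed answer.

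**The main obstacle.** The discrepancy between the naive surviving term $\binom{n+q}{q+1}$ and the target $\binom{n+q+1}{q+2}$ is the real content of the proof, and I expect this to be the hard part. The resolution must come from a more careful accounting: either the $j=1$ term (with coefficient $1!\genfrac{\{}{\}}{0pt}{}{p+1}{2}$) contributes nontrivially modulo $p$, or a Pascal-type identity must be invoked to combine terms. Concretely, $\binom{n+q+1}{q+2}=\binom{n+q}{q+2}+\binom{n+q}{q+1}$, so I would look for an additional surviving term equal to $\binom{n+q}{q+2}$ modulo $p$; this strongly suggests the $j=1$ term survives with $1!\genfrac{\{}{\}}{0pt}{}{p+1}{2}\equiv 1\pmod p$. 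Thus the crux is to pin down $\genfrac{\{}{\}}{0pt}{}{p+1}{2}\pmod p$ precisely, presumably via the closed form $\genfrac{\{}{\}}{0pt}{}{p+1}{2}=2^{p}-1\equiv 1\pmod p$ (using Fermat's little theorem), and to verify that no other intermediate terms contribute.

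**Finishing.** Once the surviving terms are identified as $j=0$ and $j=1$, contributing $\binom{n+q}{q+1}$ and $\binom{n+q}{q+2}$ respectively modulo $p$, the proof concludes by Pascal's rule:
\[
S_{p}^{\left(  q\right)  }\left(  n\right)  \equiv\binom{n+q}{q+1}+\binom{n+q}{q+2}=\binom{n+q+1}{q+2}\pmod p.
\]
The delicate points to get right are the precise residues of $\genfrac{\{}{\}}{0pt}{}{p+1}{j+1}$ modulo $p$ for small $j$ and the divisibility for the bulk of intermediate indices; everything else is bookkeeping. I would therefore organize the write-up as: (i) invoke Theorem~\ref{teo4}; (ii) reduce the coefficients modulo $p$ using the Stirling divisibility lemma and Fermat's little theorem; (iii) isolate the two surviving terms; (iv) apply Pascal's identity.
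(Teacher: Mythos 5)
Your proof is correct, but it takes a genuinely different route from the paper's. The paper starts from the alternating representation (\ref{11}), $S_{p}^{(q)}(n)=\sum_{j=1}^{p}(-1)^{p+j}\genfrac{\{}{\}}{0pt}{}{p}{j}\binom{n+q+j}{q+j+1}j!$, where modulo the prime $p$ only the $j=1$ term survives: the $j=p$ term carries the factor $p!$, the terms with $2\le j\le p-1$ vanish by Lemma \ref{conlemma1}(4), and the $j=1$ term is already $\binom{n+q+1}{q+2}$, so no recombination is needed. You instead start from Theorem \ref{teo4}, so two terms survive --- $j=0$ contributing $\binom{n+q}{q+1}$ and $j=1$ contributing $\genfrac{\{}{\}}{0pt}{}{p+1}{2}\binom{n+q}{q+2}=(2^{p}-1)\binom{n+q}{q+2}\equiv\binom{n+q}{q+2}\pmod p$ by Fermat's little theorem --- and you must then invoke Pascal's rule to assemble the answer. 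You correctly identified this recombination as the crux and resolved it. One simplification: the vanishing of the intermediate coefficients $\genfrac{\{}{\}}{0pt}{}{p+1}{j+1}$ for $2\le j\le p-1$ follows directly from Lemma \ref{conlemma1}(4) with $r=1$ (the lemma is stated for arbitrary $r$), so the recurrence argument you sketch as a fallback is unnecessary. In short, the paper's choice of starting formula buys a one-line reduction, while yours costs an extra application of Fermat and Pascal but is equally rigorous and arguably more self-explanatory about where the binomial coefficient $\binom{n+q+1}{q+2}$ comes from.
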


\begin{proof}
Let $p$ be a prime in (\ref{11}). Then%
\begin{align*}
S_{p}^{\left(  q\right)  }\left(  n\right)   &  =\left(  -1\right)  ^{p+1}%
\genfrac{\{}{\}}{0pt}{}{p}{1}%
\binom{n+q+1}{q+2}+%
\genfrac{\{}{\}}{0pt}{}{p}{p}%
\binom{n+q+p}{q+p+1}p!\\
&  +\sum_{j=2}^{p-1}\left(  -1\right)  ^{p+j}%
\genfrac{\{}{\}}{0pt}{}{p}{j}%
\binom{n+q+j}{q+j+1}j!.
\end{align*}
By Lemma \ref{conlemma1} (2) and (4), we obtain the desired result.
\end{proof}

Next result seems obvious, but we record it here as an application of the
formula for the hyper-sums in terms of the poly-Bernoulli polynomials with
negative index. For this we need the following result about the poly-Bernoulli polynomials, which follows from (\ref{12}), Lemma \ref{conlemma1} (2) and (4).

\begin{lemma}
\label{conlemma3}Given a prime number $p$, a positive integer $n$, and a
nonnegative integer $q$, we have%
\[
B_{n}^{\left(  -p\right)  }\left(  q\right)  \equiv\left(  q+2\right)
^{n}\pmod p.
\]
\end{lemma}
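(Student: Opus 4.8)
The final statement to prove is Lemma~\ref{conlemma3}: for a prime $p$, positive integer $n$, and nonnegative integer $q$,
\[
B_{n}^{\left(-p\right)}\left(q\right)\equiv\left(q+2\right)^{n}\pmod{p}.
\]
The plan is to start directly from the explicit evaluation~(\ref{12}), namely
\[
B_{n}^{\left(-p\right)}\left(q\right)=\sum_{j=1}^{p}\genfrac{\{}{\}}{0pt}{}{p}{j}\left(-1\right)^{p+j}j!\left(j+q+1\right)^{n},
\]
and reduce the sum modulo $p$ term by term, exactly as was done for Theorem~\ref{contheorem3}. The strategy is to isolate the two extreme terms $j=1$ and $j=p$, show that the inner block $2\le j\le p-1$ vanishes modulo $p$, and check that the surviving contributions combine to $\left(q+2\right)^{n}$.

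First I would peel off the $j=1$ term. Using $\genfrac{\{}{\}}{0pt}{}{p}{1}=1$ from Lemma~\ref{conlemma1}(2), this term is $\left(-1\right)^{p+1}\left(q+2\right)^{n}$, which for odd $p$ is exactly $\left(q+2\right)^{n}$. Next I would handle the range $2\le j\le p-1$: by Lemma~\ref{conlemma1}(4) each Stirling number $\genfrac{\{}{\}}{0pt}{}{p}{j}\equiv0\pmod{p}$ in precisely this range, so the entire inner sum is $\equiv0\pmod{p}$ and drops out. The remaining term is $j=p$, where $\genfrac{\{}{\}}{0pt}{}{p}{p}=1$ (Lemma~\ref{conlemma1}(2)) contributes $p!\left(p+q+1\right)^{n}$; since $p\mid p!$, this term is also $\equiv0\pmod{p}$. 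Thus only the $j=1$ term survives modulo $p$.

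The one point that needs a clean accounting is the sign and the parity of $p$: the factor $\left(-1\right)^{p+1}$ equals $+1$ when $p$ is odd, which covers all odd primes and gives the result immediately. The prime $p=2$ must be checked separately, since then the inner range $2\le j\le p-1$ is empty and the $j=1$ and $j=p$ terms coincide in index bookkeeping; here one verifies directly from~(\ref{12}) that $B_{n}^{\left(-2\right)}\left(q\right)=\genfrac{\{}{\}}{0pt}{}{2}{1}(q+2)^{n}+2!\genfrac{\{}{\}}{0pt}{}{2}{2}(q+3)^{n}=(q+2)^{n}+2(q+3)^{n}\equiv(q+2)^{n}\pmod{2}$, as required. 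I do not anticipate any genuine obstacle: the result is a straightforward specialization of~(\ref{12}) combined with the Stirling-number congruences already recorded in Lemma~\ref{conlemma1}, and the only care needed is the separate treatment of the prime $2$ and the tracking of the $\left(-1\right)^{p+1}$ sign.
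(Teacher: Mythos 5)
Your proof is correct and follows exactly the route the paper intends: the paper dispenses with Lemma~\ref{conlemma3} by noting it ``follows from (\ref{12}), Lemma~\ref{conlemma1} (2) and (4),'' and you have simply filled in those details (isolating $j=1$ and $j=p$, killing the middle range via Lemma~\ref{conlemma1}(4)). The only blemish is a dropped sign in your $p=2$ check --- equation (\ref{12}) actually gives $-(q+2)^{n}+2(q+3)^{n}$ rather than $(q+2)^{n}+2(q+3)^{n}$ --- but since $-1\equiv1\pmod 2$ the conclusion is unaffected.
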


\begin{proposition}
\label{contheorem4}Given a prime number $p$ and a nonnegative integer $q$, we
have%
\[
pS_{p}^{\left(  q\right)  }\left(  p+1\right)  \equiv0\pmod p.
\]

\end{proposition}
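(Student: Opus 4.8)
The plan is to specialize the master identity (\ref{gh-pB}) at a negative upper index and feed it into the hyper-sum correspondence (\ref{HSq}). Concretely, I would replace $p$ by $-p$ in (\ref{gh-pB}) and set $r=1$, $n=p$. Since $\genfrac{[}{]}{0pt}{}{p+1}{k+1}_{1}=\genfrac{[}{]}{0pt}{}{p+1}{k+1}$ and $H_{p+1}^{(-p,q+1)}=S_{p}^{(q)}(p+1)$ by (\ref{HSq}), this produces
\[
p!\,S_{p}^{(q)}(p+1)=\sum_{k=0}^{p}\genfrac{[}{]}{0pt}{}{p+1}{k+1}B_{k}^{(-p)}(q).
\]
Writing $p!=p\,(p-1)!$, the left-hand side is $(p-1)!\cdot pS_{p}^{(q)}(p+1)$, and since $(p-1)!\equiv-1\pmod p$ by Wilson's theorem is a $p$-unit, it suffices to show that the right-hand sum vanishes modulo $p$. (The statement is of course essentially trivial as an integer identity, as the preceding remark notes, so the real content is just this self-contained mod-$p$ evaluation illustrating the poly-Bernoulli formula.)

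Next I would reduce the first-kind Stirling coefficients modulo $p$. Using the rising-factorial generating function $(x+1)(x+2)\cdots(x+p)=\sum_{k=0}^{p}\genfrac{[}{]}{0pt}{}{p+1}{k+1}x^{k}$ together with the congruence $(x+1)\cdots(x+p)\equiv x(x+1)\cdots(x+p-1)=x^{p}-x\pmod p$, one reads off that $\genfrac{[}{]}{0pt}{}{p+1}{k+1}\equiv0\pmod p$ for $k=0$ and for $k=2,\dots,p-1$ (the latter agreeing with Lemma \ref{conlemma1}(4)), while $\genfrac{[}{]}{0pt}{}{p+1}{2}\equiv-1$ and $\genfrac{[}{]}{0pt}{}{p+1}{p+1}=1$. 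Because each $B_{k}^{(-p)}(q)$ is an integer by (\ref{12}), only the $k=1$ and $k=p$ terms survive, leaving
\[
\sum_{k=0}^{p}\genfrac{[}{]}{0pt}{}{p+1}{k+1}B_{k}^{(-p)}(q)\equiv -B_{1}^{(-p)}(q)+B_{p}^{(-p)}(q)\pmod p.
\]

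I would then finish with Lemma \ref{conlemma3} and Fermat's little theorem. Lemma \ref{conlemma3} gives $B_{1}^{(-p)}(q)\equiv q+2$ and $B_{p}^{(-p)}(q)\equiv(q+2)^{p}\pmod p$, and Fermat yields $(q+2)^{p}\equiv q+2$, so the two surviving contributions cancel and the sum is $\equiv0\pmod p$. Dividing out the unit $(p-1)!$ then gives $pS_{p}^{(q)}(p+1)\equiv0\pmod p$. The only genuinely substantive step is this final cancellation: isolating exactly the $k=1$ and $k=p$ terms and recognizing that Lemma \ref{conlemma3} combined with Fermat forces $B_{p}^{(-p)}(q)\equiv B_{1}^{(-p)}(q)$. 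The Stirling reductions and the Wilson/unit bookkeeping are routine, so I expect no serious obstacle beyond keeping the index specialization $(p\to-p,\ r=1,\ n=p)$ straight.
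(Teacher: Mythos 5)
Your argument is correct and follows essentially the same route as the paper: specialize Theorem \ref{teo3} at $-p$ with $n=p$, reduce the first-kind Stirling coefficients modulo $p$ so that only the $k=1$ and $k=p$ terms survive (with coefficients $-1$ and $1$), and cancel $-B_{1}^{(-p)}(q)+B_{p}^{(-p)}(q)\equiv -(q+2)+(q+2)^{p}\equiv 0$ via Lemma \ref{conlemma3} and Fermat's little theorem. The only difference is that you take $r=1$ where the paper takes $r=0$, so you work with $\genfrac{[}{]}{0pt}{}{p+1}{k+1}$ instead of $\genfrac{[}{]}{0pt}{}{p}{k}$; this is cosmetic, and if anything your choice matches the indexing of (\ref{HaHS}) more cleanly.
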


\begin{proof}
With the use of (\ref{HaHS}) in Theorem \ref{teo3} for $r=0$, we find that%
\[
n!S_{p}^{\left(  q\right)  }\left(  n+1\right)  =\sum_{k=0}^{n}%
\genfrac{[}{]}{0pt}{}{n}{k}%
B_{k}^{\left(  -p\right)  }\left(  q\right)  .
\]
Now let $p$ be a prime and set $n=p$. By Lemma \ref{conlemma1} (1) and (4),%
\begin{align*}
p!S_{p}^{\left(  q\right)  }\left(  p+1\right)   &  =%
\genfrac{[}{]}{0pt}{}{p}{0}%
B_{0}^{\left(  -p\right)  }\left(  q\right)  +%
\genfrac{[}{]}{0pt}{}{p}{1}%
B_{1}^{\left(  -p\right)  }\left(  q\right)  +%
\genfrac{[}{]}{0pt}{}{p}{p}%
B_{p}^{\left(  -p\right)  }\left(  q\right)  +\sum_{k=2}^{p-1}%
\genfrac{[}{]}{0pt}{}{p}{k}%
B_{k}^{\left(  -p\right)  }\left(  q\right) \\
&  \equiv\left(  p-1\right)  !B_{1}^{\left(  -p\right)  }\left(  q\right)
+B_{p}^{\left(  -p\right)  }\left(  q\right)  \pmod p.
\end{align*}
By Lemma \ref{conlemma3} and Wilson's theorem, we conclude that%
\[
pS_{p}^{\left(  q\right)  }\left(  n+1\right)  \equiv-\left(  q+2\right)
\left[  \left(  q+2\right)  ^{p-1}-1\right]  \pmod p,
\]
and the result follows from Fermat's little theorem.
\end{proof}

\end{document}